\theoremstyle{plain}
\newtheorem{theorem}{Theorem}%[section]
\newtheorem{corollary}{Corollary}
\newtheorem{definition}{Definition}
\newtheorem{lemma}{Lemma}
\begin{document}

\title[Representations of flat virtual braids by automorphisms of free group]{Representations of flat virtual braids by automorphisms of free group}

\author[B.~Chuzhinov]{Bogdan Chuzhinov}
\address{Novosibirsk State University, 630090 Novosibirsk, Russia}
\email{b.chuzhinov@g.nsu.ru}

 \author[A.~Vesnin]{Andrey Vesnin}
\address{Sobolev Institute of Mathematics, 630090 Novosibirsk, Russia; 
Novosibirsk State University, 630090 Novosibirsk, Russia} 
\email{vesnin@math.nsc.ru}

	\thanks{
A.V.'s work  was carried out in the framework of the State Task to the  Sobolev Institute of Mathematics (Project FWNF-2022-0004)} 
	\date{\today}

\begin{abstract}
Representations of braid group $B_n$ on $n \geq 2$ strands by automorphisms of a free group of rank $n$ go back to Artin (1925). In 1991 Kauffman introduced a theory of virtual braids and virtual knots and links. The virtual braid group $VB_n$ on $n \geq 2$ strands is an extension of the classical braid group $B_n$ by the symmetric group $S_n$. In this paper we consider flat virtual braid groups $FVB_n$ on $n\geq 2$ strands and construct a family of representations of $FVB_n$ by automorphisms of free groups of rank  $2n$. It has been established that these representations do not preserve the forbidden relations between classical and virtual generators. We investigated some algebraic properties of the constructed representations. In particular, we established conditions of faithfulness in case $n=2$ and proved that the kernel contains a free group of rank two for $n\ge3$.
\end{abstract}

\subjclass[2000]{57K12}	
\keywords{braid, virtual braid, flat virtual braid group, automorphism of free group} 

%%%%%%%%%%%%%%%%%%%%%%%%%%%%%%%%%%%%%%%%%%

\maketitle

\section{Introduction}
The foundations of the braid groups theory were laid down in the works of E.~Artin in the 1920s. In~\cite{A25} he defined the \textit{braid group} $B_n$ on $n \geq 2$ strands as a group with generators $\sigma_1, \ldots, \sigma_{n-1}$ and defining relations:
\begin{align}
	\sigma_{i}\sigma_{j}&=\sigma_{j}\sigma_{i}, &&|i-j|\ge 2, \label{eq1} \\
	\sigma_{i}\sigma_{i+1}\sigma_{i}&=\sigma_{i+1}\sigma_{i}\sigma_{i+1},  &&1\le i\le n-2. \label{eq2}
\end{align}
A set $\{\sigma_1, \ldots, \sigma_{n-1}\}$ is called the  \textit{standard generators}, or the \textit{Artin generators} of the braid group $B_n$. The generator $\sigma_i \in B_n$ and its inverse $\sigma_i^{-1}$ are presented  geometrically in Figure~\ref{fig1}. 
\begin{figure}[h!]
\begin{center}
\unitlength=1.0mm
\begin{picture}(70,23)(0,3)
%\tikz \draw (0,0) ellipse (2.cm and 0.25cm);
\thicklines \put(0,0){\begin{picture}(0,30)
\put(-15,10){\circle*{1}}
\put(-15,20){\circle*{1}}
\put(-5,10){\circle*{1}}
\put(-5,20){\circle*{1}}
\put(0,10){\circle*{1}}
\put(0,20){\circle*{1}}
\put(10,10){\circle*{1}}
\put(10,20){\circle*{1}}
\put(15,10){\circle*{1}}
\put(15,20){\circle*{1}}
\put(25,10){\circle*{1}}
\put(25,20){\circle*{1}}
\qbezier(-15,10)(-15,10)(-15,20)
\put(-10,15){\makebox(0,0)[c]{$\cdots$}}
\qbezier(-5,10)(-5,10)(-5,20)
\qbezier(0,10)(0,10)(10,20)
\qbezier(6,14)(6,14)(10,10)
\qbezier(4,16)(4,16)(0,20) 
\qbezier(15,10)(15,10)(15,20)
\put(20,15){\makebox(0,0)[c]{$\cdots$}}
\qbezier(25,10)(25,10)(25,20) 
\put(-15,25){\makebox(0,0)[c]{\small $1$}}
\put(0,25){\makebox(0,0)[c]{\small $i$}}
\put(10,25){\makebox(0,0)[c]{\small $i+1$}}
\put(25,25){\makebox(0,0)[c]{\small $n$}}
\put(5,5){\makebox(0,0)[c]{$\sigma_i$}} \end{picture}}
\put(60,0){\begin{picture}(0,30) 
\thicklines
%{\thinlines \put(7.25,20){\oval(50,4)}    \put(7.25,10){\oval(50,4)}}
%\alert{
\put(-15,10){\circle*{1.}}
\put(-15,20){\circle*{1.}}
\put(-5,10){\circle*{1.}}
\put(-5,20){\circle*{1.}}
\put(0,10){\circle*{1.}}
\put(0,20){\circle*{1.}}
\put(10,10){\circle*{1.}}
\put(10,20){\circle*{1.}}
\put(15,10){\circle*{1.}}
\put(15,20){\circle*{1.}}
\put(25,10){\circle*{1.}}
\put(25,20){\circle*{1.}}
\qbezier(-15,10)(-15,10)(-15,20)
\put(-10,15){\makebox(0,0)[c]{$\cdots$}}
\qbezier(-5,10)(-5,10)(-5,20)
 \qbezier(0,10)(0,10)(4,14)
\qbezier(6,16)(6,16)(10,20) \qbezier(10,10)(10,10)(0,20)
\qbezier(15,10)(15,10)(15,20)
\put(20,15){\makebox(0,0)[c]{$\cdots$}}
\qbezier(25,10)(25,10)(25,20) 
%}
\put(-15,25){\makebox(0,0)[c]{\small $1$}}
\put(0,25){\makebox(0,0)[c]{\small $i$}}
\put(10,25){\makebox(0,0)[c]{\small $i+1$}}
\put(25,25){\makebox(0,0)[c]{\small $n$}}
\put(5,5){\makebox(0,0)[c]{$\sigma^{-1}_i$}} \end{picture}}
\end{picture} \caption{Generator $\sigma_i \in B_n$ and its inverse $\sigma_i^{-1}$.} \label{fig1}
\end{center} 
\end{figure}
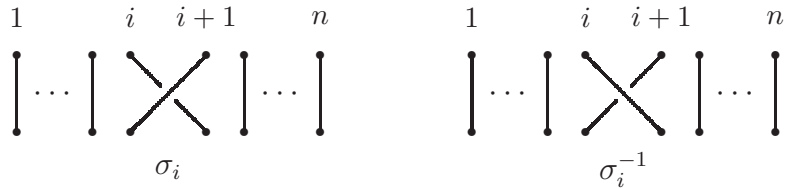

There is very nice relation between braid groups and knot theory  based on \textit{Alexender's theorem} and \textit{Markov's theorem}~\cite{KT}.  Invariants, arising from representations of braid groups, play an important role in classical knot theory and its generalizations.

Artin discovered faithful representation $\varphi_n\colon B_n\to{\rm{Aut(\mathbb F}}_n)$, where ${\mathbb{F}}_n = \langle x_1, \ldots , x_n\rangle$ -- is the free group of rank $n$. Homomorphism $\varphi_n$ maps generator $\sigma_i \in B_n$ to the following automorphism $\varphi_n(\sigma_i)$: 
\begin{align*}
	\varphi_n(\sigma_i):
	\begin{cases}
		x_i \mapsto x_{i+1},\\
		x_{i+1} \mapsto x_{i+1}^{-1}x_ix_{i+1}, \\
		x_j \mapsto x_j, \qquad j \neq i, i+1.
	\end{cases}
\end{align*}
Note, that for each $i$, $1 \leq i \leq n-1$ one has $\varphi_n(\sigma_i) (x_1 \cdots x_n) =  x_1 \cdots x_n$. Therefore $\varphi_n (\beta) (x_1 \cdots x_n) =  ( x_1 \cdots x_n)$ for every $\beta \in B_n$. Moreover, it is shown by Artin~\cite{A25, A47} that an automorphism $g \in \operatorname{Aut} (\mathbb F_n)$ is equal to $\varphi_n(\beta)$ for some $\beta \in B_n$ if and only if the following two conditions are satisfied: (i) $f(x_i)$ is conjugate of some $x_j$ for $i=1, \ldots, n-1$; and (ii) $f(x_1 \cdots x_n) = x_1 \cdots x_n$. 

In what follows, if any automorphism acts on a generator identically, we will not write this action. We write the composition of automorphisms in the order of their application from left to right, namely, $\varphi\psi(f)=\psi(\varphi(f))$.

Virtual braids were introduced by Kauffman in his founding paper~\cite{K99}  together with virtual knots and links, see also~\cite{KL}. In the same paper he defined the virtual braid group $VB_n$ on $n \geq 2$ strands, generated by the elements $\sigma_1,\ldots,\sigma_{n-1}$ similarly to the classical braid group and $\rho_1,\ldots \rho_{n-1} $ that satisfy \emph{skew relations} (\ref{eq1})~-- (\ref{eq2}), \emph{substitution group relations} (\ref{eq3})~-- (\ref{eq5}) and \emph{mixed relations} (\ref{eq6})~-- (\ref{eq7}):
\begin{align}
	\rho_{i}^{2}&=1, &&1\le i\le n-1, \label{eq3} \\
	\rho_{i}\rho_{j}&=\rho_{j}\rho_{i},  &&|i-j|\ge 2, \label{eq4} \\
	\rho_{i}\rho_{i+1}\rho_{i}&=\rho_{i+1}\rho_{i}\rho_{i+1}, &&1\le i\le n-2, \label{eq5} \\
	\rho_{i}\sigma_{j}&=\sigma_{j}\rho_{i},  &&|i-j|\ge 2,\label{eq6} \\
	\rho_{i}\rho_{i+1}\sigma_{i}&=\sigma_{i+1}\rho_{i}\rho_{i+1},  &&1\le i\le n-2. \label{eq7} 
\end{align}

Generator $\rho_i \in VB_n$ is  presented geometrically in Figure~\ref{fig2}.
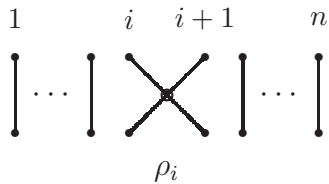
\begin{figure}[h!]
\begin{center}
\unitlength=1.0mm
\begin{picture}(0,25)(5,3)
\thicklines 
\put(-15,10){\circle*{1}}
\put(-15,20){\circle*{1}}
\put(-5,10){\circle*{1}}
\put(-5,20){\circle*{1}}
\put(0,10){\circle*{1}}
\put(0,20){\circle*{1}}
\put(10,10){\circle*{1}}
\put(10,20){\circle*{1}}
\put(15,10){\circle*{1}}
\put(15,20){\circle*{1}}
\put(25,10){\circle*{1}}
\put(25,20){\circle*{1}}
\qbezier(-15,10)(-15,10)(-15,20)
\put(-10,15){\makebox(0,0)[c]{$\cdots$}}
\qbezier(-5,10)(-5,10)(-5,20) 
\qbezier(0,10)(0,10)(10,20) 
\qbezier(10,10)(10,10)(0,20)
\qbezier(15,10)(15,10)(15,20)
\put(20,15){\makebox(0,0)[c]{$\cdots$}}
\qbezier(25,10)(25,10)(25,20) 
\put(5.7,15){\makebox(0,0)[c]{\circle{1.4}}}
\put(-15,25){\makebox(0,0)[c]{\small $1$}}
\put(0,25){\makebox(0,0)[c]{\small $i$}}
\put(10,25){\makebox(0,0)[c]{\small $i+1$}}
\put(25,25){\makebox(0,0)[c]{\small $n$}}
\put(5,5){\makebox(0,0)[c]{$\rho_i$}}
\end{picture} \caption{Generator $\rho_i \in VB_n$.} \label{fig2}
\end{center} 
\end{figure}

Geometric braids corresponding to the mixed relation (\ref{eq7}) are presented in Figure~\ref{fig3}.

\begin{figure}[h!]
\begin{center}
\unitlength=1.0mm
\begin{picture}(60,45)(0,3)
\thicklines 
\put(0,10){\circle*{1}}
\put(10,10){\circle*{1}}
\put(20,10){\circle*{1}}
\put(40,10){\circle*{1}}
\put(50,10){\circle*{1}}
\put(60,10){\circle*{1}}
\put(0,40){\circle*{1}}
\put(10,40){\circle*{1}}
\put(20,40){\circle*{1}}
\put(40,40){\circle*{1}}
\put(50,40){\circle*{1}}
\put(60,40){\circle*{1}}
 \qbezier(0,10)(0,10)(10,20)
\qbezier(0,20)(0,20)(4,16) 
\qbezier(6,14)(10,10)(10,10)
\qbezier(20,10)(20,10)(20,20)
\qbezier(0,20)(0,20)(0,30)
\qbezier(10,20)(10,20)(20,30)
\qbezier(20,20)(20,20)(10,30)
\put(15,25){\circle{1.4}}
\qbezier(0,30)(0,30)(10,40)
\qbezier(10,30)(10,30)(0,40)
\qbezier(20,30)(20,30)(20,40)
\put(5,35){\circle{1.4}}
\put(0,45){\makebox(0,0)[c]{\small $i$}}
\put(10,45){\makebox(0,0)[c]{\small $i+1$}}
\put(20,45){\makebox(0,0)[c]{\small $i+2$}}
\put(10,5){\makebox(0,0)[c]{$\rho_i \rho_{i+1} \sigma_i$}} 
%%%%%
\put(30,25){\makebox(0,0)[c]{$=$}} 
 \qbezier(40,10)(40,10)(40,20)
\qbezier(50,10)(50,10)(60,20) 
\qbezier(60,10)(60,10)(50,20)
\put(55,15){\circle{1.4}}
\qbezier(40,20)(40,20)(50,30)
\qbezier(50,20)(50,20)(40,30)
\qbezier(60,20)(60,20)(60,30)
\put(45,25){\circle{1.4}}
\qbezier(40,30)(40,30)(40,40)
\qbezier(50,40)(50,40)(54,36)
\qbezier(60,30)(60,30)(56,34)
\qbezier(50,30)(50,30)(60,40)
\put(40,45){\makebox(0,0)[c]{\small $i$}}
\put(50,45){\makebox(0,0)[c]{\small $i+1$}}
\put(60,45){\makebox(0,0)[c]{\small $i+2$}}
\put(50,5){\makebox(0,0)[c]{$\sigma_{i+1} \rho_i \rho_{i+1}$}} 
\end{picture} \caption{The mixed relation $\rho_i \rho_{i+1} \sigma_i = \sigma_{i+1} \rho_i \rho_{i+1}$.} \label{fig3}
\end{center} 
\end{figure}
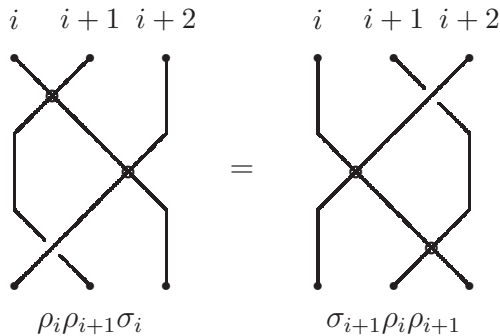

S.~Kamada~\cite{K07} established that the following \emph{Alexander's theorem for virtual braids} holds: If $K$ is a virtual link, then for some $n$ there exists a virtual braid $\beta \in VB_n $ such that $K$ is the closure of $\beta$.

It is known as shown in \cite{GPV} that relations
\begin{eqnarray}
	\rho_{i}\sigma_{i+1}\sigma_{i} =\sigma_{i+1}\sigma_{i}\rho_{i+1}, \qquad \qquad 1\le i\le n-2, \label{eq8} \\
	\rho_{i+1}\sigma_{i}\sigma_{i+1}=\sigma_{i}\sigma_{i+1}\rho_{i}, \qquad \qquad 1\le i\le n-2. \label{eq9}
\end{eqnarray}
do not hold in the $VB_n$ group. These relations (\ref{eq8})~-- (\ref{eq9}) are called \emph{forbidden relations}, see Figures~\ref{fig4} and~\ref{fig5}. The group $WB_n$ is obtained from $VB_n$ by adding the relation~(\ref{eq8}) and is called \emph{welded braid group}~\cite{FRR97}. The same group $WB_n$ is obtained by adding the relation~(\ref{eq9}) to the group $VB_n$. Adding both relations (\ref{eq8}) and (\ref{eq9}) to $VB_n$ leads to unknotting transformations for virtual knots and links~\cite{GPV, K01, K07}. Other unknotting operations for links, virtual links and welded links are given, for example, in~\cite{GPV20, GKPV21, KPV19}. Note that the representations $VB_n \to \rm{Aut} (G_n)$ were constructed, for example, for groups $G_n$ of the following form: $G_n = \mathbb F_n * \mathbb Z^{n+1}$ in~\cite{SW}, $G_n = \mathbb F_n * \mathbb Z^2$ in~\cite{B}, $G_n = \mathbb F_n * \mathbb Z^{2n+1}$ and $G_n = \mathbb F_n * \mathbb Z^n$ in~\cite{BMN}. For structural properties and other representations of the virtual braid groups see~\cite{BP, BEIKNV}. 

\begin{figure}[h!]
\begin{center}
\unitlength=1.0mm
\begin{picture}(60,45)(0,3)
\thicklines 
\put(0,10){\circle*{1}}
\put(10,10){\circle*{1}}
\put(20,10){\circle*{1}}
\put(40,10){\circle*{1}}
\put(50,10){\circle*{1}}
\put(60,10){\circle*{1}}
\put(0,40){\circle*{1}}
\put(10,40){\circle*{1}}
\put(20,40){\circle*{1}}
\put(40,40){\circle*{1}}
\put(50,40){\circle*{1}}
\put(60,40){\circle*{1}}
 \qbezier(0,10)(0,10)(10,20)
\qbezier(0,20)(0,20)(4,16) 
\qbezier(6,14)(10,10)(10,10)
\qbezier(20,10)(20,10)(20,20)
\qbezier(0,20)(0,20)(0,30)
 \qbezier(10,20)(10,20)(20,30)
\qbezier(10,30)(10,30)(14,26) 
\qbezier(16,24)(20,20)(20,20)
\qbezier(0,30)(0,30)(10,40)
\qbezier(10,30)(10,30)(0,40)
\qbezier(20,30)(20,30)(20,40)
\put(5,35){\circle{1.4}}
\put(0,45){\makebox(0,0)[c]{\small $i$}}
\put(10,45){\makebox(0,0)[c]{\small $i+1$}}
\put(20,45){\makebox(0,0)[c]{\small $i+2$}}
\put(10,5){\makebox(0,0)[c]{$\rho_i \sigma_{i+1} \sigma_i $}} 
%%%%%
\put(30,25){\makebox(0,0)[c]{$=$}} 
 \qbezier(40,10)(40,10)(40,20)
\qbezier(50,10)(50,10)(60,20) 
\qbezier(60,10)(60,10)(50,20)
\put(55,15){\circle{1.4}}
\qbezier(60,20)(60,20)(60,30)
\qbezier(40,30)(40,30)(44,26)
\qbezier(50,20)(50,20)(46,24)
\qbezier(40,20)(40,20)(50,30)
\qbezier(40,30)(40,30)(40,40)
\qbezier(50,40)(50,40)(54,36)
\qbezier(60,30)(60,30)(56,34)
\qbezier(50,30)(50,30)(60,40)
\put(40,45){\makebox(0,0)[c]{\small $i$}}
\put(50,45){\makebox(0,0)[c]{\small $i+1$}}
\put(60,45){\makebox(0,0)[c]{\small $i+2$}}
\put(50,5){\makebox(0,0)[c]{$\sigma_{i+1} \sigma_i \rho_{i+1}$}} 
\end{picture} \caption{The forbidden relation $\rho_i \sigma_{i+1} \sigma_i = \sigma_{i+1} \sigma_i \rho_{i+1}$.} \label{fig4}
\end{center} 
\end{figure}

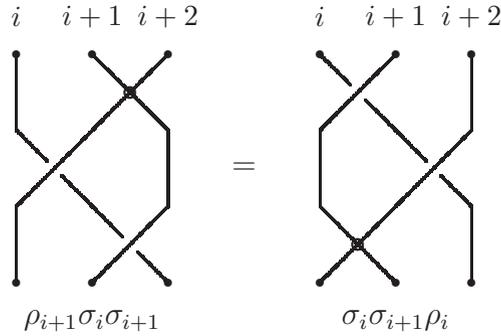
\begin{figure}[h!]
\begin{center}
\unitlength=1.0mm
\begin{picture}(60,43)(0,3)
\thicklines 
\put(0,10){\circle*{1}}
\put(10,10){\circle*{1}}
\put(20,10){\circle*{1}}
\put(40,10){\circle*{1}}
\put(50,10){\circle*{1}}
\put(60,10){\circle*{1}}
\put(0,40){\circle*{1}}
\put(10,40){\circle*{1}}
\put(20,40){\circle*{1}}
\put(40,40){\circle*{1}}
\put(50,40){\circle*{1}}
\put(60,40){\circle*{1}}
\qbezier(0,10)(0,10)(0,20)
 \qbezier(10,10)(10,10)(20,20)
\qbezier(10,20)(10,20)(14,16) 
\qbezier(16,14)(20,10)(20,10)
 \qbezier(0,20)(0,20)(10,30)
\qbezier(0,30)(0,30)(4,26) 
\qbezier(6,24)(10,20)(10,20)
\qbezier(20,20)(20,20)(20,30)
\qbezier(10,30)(10,30)(20,40)
\qbezier(20,30)(20,30)(10,40)
\qbezier(0,30)(0,30)(0,40)
\put(15,35){\circle{1.4}}
\put(0,45){\makebox(0,0)[c]{\small $i$}}
\put(10,45){\makebox(0,0)[c]{\small $i+1$}}
\put(20,45){\makebox(0,0)[c]{\small $i+2$}}
\put(10,5){\makebox(0,0)[c]{$\rho_{i+1} \sigma_{i} \sigma_{i+1}$}} 
%%%%%
\put(30,25){\makebox(0,0)[c]{$=$}} 
 \qbezier(60,10)(60,10)(60,20)
\qbezier(40,10)(40,10)(50,20) 
\qbezier(50,10)(50,10)(40,20)
\put(45,15){\circle{1.4}}
\qbezier(40,20)(40,20)(40,30)
\qbezier(50,30)(50,30)(54,26)
\qbezier(60,20)(60,20)(56,24)
\qbezier(50,20)(50,20)(60,30)
\qbezier(60,30)(60,30)(60,40)
\qbezier(40,40)(40,40)(44,36)
\qbezier(50,30)(50,30)(46,34)
\qbezier(40,30)(40,30)(50,40)
\put(40,45){\makebox(0,0)[c]{\small $i$}}
\put(50,45){\makebox(0,0)[c]{\small $i+1$}}
\put(60,45){\makebox(0,0)[c]{\small $i+2$}}
\put(50,5){\makebox(0,0)[c]{$\sigma_{i} \sigma_{i+1} \rho_{i}$}} 
\end{picture} \caption{The forbidden relation $\rho_{i+1} \sigma_{i} \sigma_{i+1} = \sigma_{i} \sigma_{i+1} \rho_{i}$.} \label{fig5}
\end{center} 
\end{figure}

In~\cite{K00} the \emph{group of flat virtual braids} $FVB_n$ on $n$ strands was introduced as a result of adding the relations~(\ref{eq10}) to the group $VB_n$:
\begin{equation}
	\sigma_{i}^{2} =1, \qquad1\le i\le n-1. \label{eq10}
\end{equation}

We summarize the above discussions in the following definition. 
\begin{definition}
For $n \geq 2$ a group with generators $\sigma_1, \ldots, \sigma_{n-1}$, $\rho_1, \ldots, \rho_{n-1}$ and the following defining relations:  
$$
\begin{array}{ccl}
 \sigma_{i}^{2}=1, &  \rho_{i}^{2}=1,   &1\le i\le n-1,  \\
\sigma_{i} \, \sigma_{i+1}\, \sigma_{i}=\sigma_{i+1} \, \sigma_{i} \, \sigma_{i+1},   & \rho_{i} \, \rho_{i+1} \, \rho_{i}= \rho_{i+1} \, \rho_{i} \, \rho_{i+1}, &  1\le i\le n-2, \\
\sigma_{i}\, \sigma_{j} = \sigma_{j}\, \sigma_{i},   &\rho_{i} \, \rho_{j} = \rho_{j} \, \rho_{i}, & |i-j|\ge 2 
\end{array}
$$
and 
$$
\begin{array}{cl}
\rho_{i} \, \rho_{i+1} \, \sigma_{i} = \sigma_{i+1} \, \rho_{i} \, \rho_{i+1}, & 1\le i\le n-2, \\
\rho_{i} \,\sigma_{j} = \sigma_{j} \, \rho_{i}, & |i-j|\ge 2.
\end{array}
$$
is called the \emph{flat virtual braid group $FVB_n$ on $n$ strands}.
\end{definition}
Generator $\sigma_i \in FVB_n$  is presented geometrically in Figure~\ref{fig6} and generator $\rho \in FVB_n$ is presented  geometrically in Figure~\ref{fig2}. 
\begin{figure}[h!]
\begin{center}
\unitlength=1.0mm
\begin{picture}(40,25)(-15,3)
\thicklines 
\put(-15,10){\circle*{1}}
\put(-15,20){\circle*{1}}
\put(-5,10){\circle*{1}}
\put(-5,20){\circle*{1}}
\put(0,10){\circle*{1}}
\put(0,20){\circle*{1}}
\put(10,10){\circle*{1}}
\put(10,20){\circle*{1}}
\put(15,10){\circle*{1}}
\put(15,20){\circle*{1}}
\put(25,10){\circle*{1}}
\put(25,20){\circle*{1}}
\qbezier(-15,10)(-15,10)(-15,20)
\put(-10,15){\makebox(0,0)[c]{$\cdots$}}
\qbezier(-5,10)(-5,10)(-5,20) 
\qbezier(0,10)(0,10)(10,20) 
\qbezier(10,10)(10,10)(0,20)
\qbezier(15,10)(15,10)(15,20)
\put(20,15){\makebox(0,0)[c]{$\cdots$}}
\qbezier(25,10)(25,10)(25,20) 
%\put(5.7,15){\makebox(0,0)[c]{\circle{1.4}}}
\put(-15,25){\makebox(0,0)[c]{\small $1$}}
\put(0,25){\makebox(0,0)[c]{\small $i$}}
\put(10,25){\makebox(0,0)[c]{\small $i+1$}}
\put(25,25){\makebox(0,0)[c]{\small $n$}}
\put(5,5){\makebox(0,0)[c]{$\sigma_i$}}
\end{picture} \caption{Generator $\sigma_i \in FVB_n$.} \label{fig6}
\end{center} 
\end{figure}
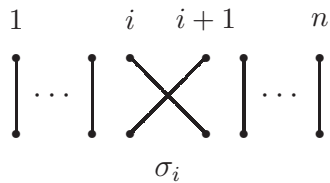

In~\cite{FIKM} the following problem was formulated: Does it exist a representation of the $FVB_n$ group by automorphisms of some group for which the forbidden relations would not hold? In~\cite{B23} such representation $\eta_{n} \colon FVB_n \to {\rm Aut} ( \mathbb F_{2n})$ was constructed, here ${\mathbb{F}}_{2n}= \langle x_1,\ldots, x_n,y_1,\ldots,y_n\rangle$ is a free group of rank $2n$. The homomorphism $\eta_n$ maps generators $\sigma_i, \rho_i \in FVB_n$, where $i = 1, \ldots, n-1$, to the following automorphisms:
\begin{align}
	\eta_n(\sigma_i):
	\begin{cases}
		x_i \mapsto x_{i+1}y_{i+1},\\
		x_{i+1} \mapsto x_iy_{i+1}^{-1},
	\end{cases}
	\quad
	\eta_n(\rho_i):
	\begin{cases}
		x_i \mapsto x_{i+1},\\
		x_{i+1} \mapsto x_i,\\
		y_i \mapsto y_{i+1}, \\ 
		y_{i+1} \mapsto y_i. 
	\end{cases} \label{eq11}
\end{align}
It was shown in~\cite{B23} that the representation $\eta_2 : FVB_2\to {\rm{Aut(\mathbb F}}_4)$ is faithful.

In this paper we construct a family of representations of the $FVB_n$ group by automorphisms of the free group $\mathbb F_{2n}= \langle x_1, \ldots, x_n, y_1,\ldots, y_n \rangle$, which generalize the representation~(\ref{eq11}). Namely, we consider a family of homomorphisms $\Theta_n : FVB_{n} \to {\rm{Aut(\mathbb F}}_{2n})$, which are given by mapping generators $\sigma_i, \rho_i \in FVB_n$, where $i = 1, \ldots, n-1$, to the following automorphisms:
\begin{equation}
	\Theta_n(\sigma_i):
	\begin{cases}
		x_i \mapsto x_{i+1} \, a_i (y_i, y_{i+1}),\\
		x_{i+1} \mapsto x_i \, b_i (y_i, y_{i+1}),
	\end{cases}
	\quad
	\Theta_n(\rho_i):
	\begin{cases}
		x_i \mapsto x_{i+1} \, c_i (y_i, y_{i+1}),\\
		x_{i+1} \mapsto x_{i} \, d_i (y_i, y_{i+1}),\\
		y_i \mapsto y_{i+1}, \\ 
		y_{i+1} \mapsto y_{i},
	\end{cases} \label{eq12.old}
\end{equation}
where the elements $a_i (y_i, y_{i+1})$, $b_i (y_i, y_{i+1})$, $c_i( y_i, y_{i+1})$ and $d_i(y_i, y_{i+1})$ are words in a free group of rank two with generators $\{y_i, y_{i+1} \}$ for each $i = 1, \ldots, n-1$. Thus, the homomorphisms $\Theta_n$ depend only on the choice of the words $a_i, b_i, c_i, d_i$, which define the locally nontrivial action of the automorphism corresponding to the generator of the group $FVB_n$, and in this sense the homomorphisms $\Theta_n$ are \emph{local homomorphisms}.

\smallskip 

The article has the following structure. In Theorem~\ref{theorem1.1} we establish for which $a_i$, $b_i$, $c_i$ and $d_i$ there exists a local homomorphism $\Theta_n$ of the group $FVB_n$ into the automorphism group of the free group $\mathbb F_{ 2n}$. In Section~\ref{section2} we obtain results about the structure of the kernel of the homomorphism $\Theta_n$, in particular, in Theorem~\ref{theorem2.5} we describe the kernel of this homomorphism for $n=2$. In Theorem~\ref{theorem3.2} it will be established that for $n \geq 3$ the kernel of the homomorphism $\Theta_n$ contains a free group of rank 2. We note it was shown earlier in~\cite{B23} that for $ n \geq 3$ the kernel of the homomorphism $\eta_n$, which is a special case of $\Theta_n$, contains an infinite cyclic group.

\section{Existence of local representations} \label{section1}
Let $\mathbb F_{2n}$ be a free group of rank $2n$ with free generators $x_1, \ldots, x_n$, $y_1,\ldots, y_n$.

\begin{theorem} \label{theorem1.1}
	Let $n \ge 2$ and $a_i (y_i, y_{i+1})$, $b_i (y_i, y_{i+1})$, $c_i(y_i, y_{i+1})$, $d_i(y_i, y_{i+1})$ are words in a free group of rank two with generators $\{y_i, y_{i+1} \}$, where $1\le i \le n-1$. Define the map $\Theta_n : FVB_{n} \to {\rm{Aut(\mathbb F}}_{2n})$ by mapping $\sigma_i$ and $\rho_i$ to automorphisms:
	\begin{equation}
		\Theta_n(\sigma_i):
		\begin{cases}
			x_i \mapsto x_{i+1} \, a_i (y_i, y_{i+1}),\\
			x_{i+1} \mapsto x_i \, b_i (y_i, y_{i+1}),
		\end{cases}
		\quad
		\Theta_n(\rho_i):
		\begin{cases}
			x_i \mapsto x_{i+1} \, c_i (y_i, y_{i+1}),\\
			x_{i+1} \mapsto x_{i} \, d_i (y_i, y_{i+1}),\\
			y_i \mapsto y_{i+1}, \\ 
			y_{i+1} \mapsto y_{i}. 
		\end{cases} \label{eq12}
	\end{equation}
	Then the following properties hold.
	\begin{itemize}
		\item[(1)] The map $\Theta_n$ is homomorphism iff
		$$
		b_i(y_i, y_{i+1}) = a_i^{-1}(y_i, y_{i+1}),\qquad 
		c_i(y_i, y_{i+1}) = y_{i+1}^{m_i},\qquad 
		d_i(y_i, y_{i+1}) = y_i^{-m_i},
		$$
		where $m_i\in\mathbb{Z}$ for $1\le i\le n-1$, and
		\begin{align*}
			a_{j}(y_j, y_{j+1}) &= y_{j+1}^{m_j} \, a_{j-1}(y_{j}, y_{j+1}) \, y_j^{-m_{j-1}}, \quad 2\le j\le n-1,
		\end{align*}
		with $n\ge 3$, where $a_1 = w (y_1, y_2)$ for some word $w(A, B)\in\mathbb{F}_2 = \langle A, B \rangle$. 
		\item[(2)] The map $\Theta_n$ does not preserve the forbidden relations. 
	\end{itemize}
\end{theorem}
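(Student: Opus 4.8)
The plan is to test, one defining relation of $FVB_n$ at a time, which constraints on the words $a_i,b_i,c_i,d_i$ are forced by $\Theta_n$ respecting that relation; since $FVB_n$ is presented by generators and relations, $\Theta_n$ is a homomorphism precisely when every relation is respected. A preliminary remark, independent of the choice of words: $\Theta_n(\sigma_i)$ and $\Theta_n(\rho_i)$ are always automorphisms of $\mathbb{F}_{2n}$, being surjective endomorphisms — the generators $y_j$ are permuted, so each of $a_i,b_i,c_i,d_i$ lies in the image, hence so does every $x_j$ — and surjective endomorphisms of finitely generated free groups are automorphisms.

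For the ``if'' direction I would substitute the asserted forms into each relation. The relations with $|i-j|\ge 2$ are respected at once, the automorphisms involved acting nontrivially on disjoint sets of generators. A one-line computation gives $\Theta_n(\sigma_i)^2=\mathrm{id}\iff b_i=a_i^{-1}$ and $\Theta_n(\rho_i)^2=\mathrm{id}\iff d_i(A,B)=c_i(B,A)^{-1}$; in particular $c_i=y_{i+1}^{m_i}$ forces $d_i=y_i^{-m_i}$. A longer but routine substitution shows that the braid relation $\sigma_i\sigma_{i+1}\sigma_i=\sigma_{i+1}\sigma_i\sigma_{i+1}$ is respected for every $a_i$ once $b_i=a_i^{-1}$, hence imposes no condition; that $\rho_i\rho_{i+1}\rho_i=\rho_{i+1}\rho_i\rho_{i+1}$ is respected whenever each $c_i$ is a power of $y_{i+1}$; and that respecting the mixed relation $\rho_i\rho_{i+1}\sigma_i=\sigma_{i+1}\rho_i\rho_{i+1}$ is equivalent, after evaluating both sides on $x_{i+1}$, to $a_{i+1}(A,B)=B^{m_{i+1}}a_i(A,B)A^{-m_i}$ — that is, to the stated recursion, which leaves $a_1$ arbitrary.

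For the ``only if'' direction the same computations, run in reverse, give $b_i=a_i^{-1}$ from $\Theta_n(\sigma_i)^2=\mathrm{id}$, then $d_i(A,B)=c_i(B,A)^{-1}$ from $\Theta_n(\rho_i)^2=\mathrm{id}$, and, once the form of $c_i$ is known, the recursion from the mixed relation. The one genuinely free-group-theoretic step — the place I expect the real work — is extracting $c_i=y_{i+1}^{m_i}$ from the braid relation among the $\rho$'s (here $n\ge 3$). Evaluating $\Theta_n(\rho_i)\Theta_n(\rho_{i+1})\Theta_n(\rho_i)$ and $\Theta_n(\rho_{i+1})\Theta_n(\rho_i)\Theta_n(\rho_{i+1})$ on $x_i$ and using $\Theta_n(\rho_i)^2=\mathrm{id}$ yields a word identity $c_{i+1}(y_i,y_{i+2})\,c_i(y_{i+1},y_{i+2})=c_{i+1}(y_{i+1},y_{i+2})\,c_i(y_i,y_{i+2})$ in $\langle y_i,y_{i+1},y_{i+2}\rangle$. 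The retraction $y_{i+2}\mapsto 1$ forces the first-variable exponent sum of every $c_i$ to vanish; the substitution $y_{i+1}\mapsto y_{i+2}$ then gives $c_{i+1}(A,B)=B^{q_{i+1}}c_i(A,B)B^{-q_i}$, where $q_i$ denotes the second-variable exponent sum of $c_i$; and reinserting this in the identity reduces it to the statement that $P:=c_i(y_i,y_{i+2})\,y_{i+2}^{-q_i}$ and $Q:=c_i(y_{i+1},y_{i+2})\,y_{i+2}^{-q_i}$ commute. Since $P$ lies in the free factor $\langle y_i,y_{i+2}\rangle$ and $Q$ in the free factor $\langle y_{i+1},y_{i+2}\rangle$, and since centralizers of nontrivial elements of a free group are cyclic while free factors are closed under roots, both $P$ and $Q$ lie in $\langle y_i,y_{i+2}\rangle\cap\langle y_{i+1},y_{i+2}\rangle=\langle y_{i+2}\rangle$ (the case $P=1$ or $Q=1$ being immediate); hence $c_i(A,B)=B^{m_i}$, i.e. $c_i(y_i,y_{i+1})=y_{i+1}^{m_i}$.

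Part (2) uses nothing about the words. Since $\Theta_n(\sigma_j)$ fixes every $y$-generator and $\Theta_n(\rho_j)$ interchanges $y_j$ and $y_{j+1}$, the two sides of the forbidden relation $\rho_i\sigma_{i+1}\sigma_i=\sigma_{i+1}\sigma_i\rho_{i+1}$ act on the $y$-generators as the transpositions $(y_i\,y_{i+1})$ and $(y_{i+1}\,y_{i+2})$; evaluating both images on $y_i$ gives $y_{i+1}$ and $y_i$, distinct in $\mathbb{F}_{2n}$, so this relation is not preserved. The same comparison for $\rho_{i+1}\sigma_i\sigma_{i+1}=\sigma_i\sigma_{i+1}\rho_i$ — whose two sides send $y_i$ to $y_i$ and to $y_{i+1}$ respectively — handles the other forbidden relation, completing the proof.
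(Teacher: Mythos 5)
Your proof is correct and follows the same relation-by-relation strategy as the paper's: the squares $\Theta_n(\sigma_i)^2$ and $\Theta_n(\rho_i)^2$ give $b_i=a_i^{-1}$ and $d_i(A,B)=c_i(B,A)^{-1}$, the $\rho$-braid relation yields the word identity constraining the $c_i$, the mixed relation yields the recursion for the $a_i$, and the forbidden relations fail because the two sides act differently on $y_i$. The one place you go beyond the paper is the right place to do so: the paper merely asserts that the identity $c_{i+1}(y_i,y_{i+2})\,c_i(y_{i+1},y_{i+2})=c_{i+1}(y_{i+1},y_{i+2})\,c_i(y_i,y_{i+2})$ is ``only possible'' when $c_i$ is a power of the second variable, whereas your retraction/substitution argument reducing it to two commuting elements of distinct free factors (hence both lying in $\langle y_{i+2}\rangle$) actually proves that claim.
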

\begin{proof}
	(1) Let us verify that the relations (\ref{eq1})~-- (\ref{eq7}) and the relation (\ref{eq10}) are preserved under the map $\Theta_{n}$. 
	Denote $s_{i}=\Theta_{n}(\sigma_{i}) \in {\rm Aut} (\mathbb F_{2n})$ and $r_{i}=\Theta_{n}(\rho_{i}) \in {\rm Aut} (\mathbb F_{2n})$. 
	
	The relations (\ref{eq1}), (\ref{eq4}) and (\ref{eq6}) are preserved because $s_i$ acts non-trivially only on $x_i$ and $ x_{i+1}$, while $r_i$ acts non-trivially only on $x_i$, $x_{i+1}$, $y_i$ and $y_{i+1}$.
	
	Since 
	$$
	s_i^2 :  
	\begin{cases}
		x_i  \mapsto  x_{i+1} \, a_i (y_i, y_{i+1})  \mapsto  x_i \, b_i (y_i, y_{i+1}) \, a_i (y_i, y_{i+1}), \\
		x_{i+1}  \mapsto  x_i \, b_i (y_i, y_{i+1})  \mapsto  x_{i+1} \, a_i (y_i, y_{i+1}) \, b_i (y_i, y_{i+1}) , 
	\end{cases}
	$$
	the relation (\ref{eq10}) is preserved if and only if  $b_i (y_i, y_{i+1}) = a_i^{-1} (y_i, y_{i+1})$ for all $1\le i \le n-1$.
	Further,
	$$
	r_i^2 :  
	\begin{cases}
		x_i  \mapsto  x_{i+1} \, c_i (y_i, y_{i+1})  \mapsto  x_i \, d_i (y_i, y_{i+1}) \, c_i (y_{i+1}, y_i), \\
		x_{i+1}  \mapsto  x_i \, d_i (y_i, y_{i+1})  \mapsto  x_{i+1} \, c_i (y_i, y_{i+1}) \, d_i (y_{i+1}, y_i) ,
	\end{cases}
	$$
	and relation (\ref{eq3}) is preserved iff
	\begin{align}\label{d_and_c}
		d_i(y_i, y_{i+1})=c_i^{-1}(y_{i+1},y_i), \qquad 1\le i \le n-1.
	\end{align}
	Consider the actions of automorphisms $r_{i} r_{i+1} s_{i}$ and $s_{i+1} r_{i} r_{i+1}$. We have
	$$
	r_{i} r_{i+1} s_{i} :
	\begin{cases}
		x_{i}  \mapsto  x_{i+1}c_i(y_i,y_{i+1})  \mapsto  x_{i+2}c_{i+1}(y_{i+1},y_{i+2})c_i(y_i,y_{i+2}), \\
		x_{i+1}  \mapsto  x_ic_i^{-1}(y_{i+1},y_i)  \mapsto  x_ic_i^{-1}(y_{i+2},y_i)  \mapsto  x_{i+1}a_i(y_i,y_{i+1})c_i^{-1}(y_{i+2},y_i),  \\ 
		x_{i+2}  \mapsto  x_{i+2}  \mapsto  x_{i+1}c_{i+1}^{-1}(y_{i+2},y_{i+1})  \mapsto  x_ia_i^{-1}(y_i, y_{i+1})c_{i+1}^{-1}(y_{i+2},y_{i+1}),
	\end{cases}
	$$
	and
	$$
	s_{i+1} r_{i} r_{i+1} :
	\begin{cases}
		x_{i}  \mapsto  x_{i+1}c_i(y_i,y_{i+1}) \mapsto x_{i+2}c_{i+1}(y_{i+1},y_{i+2})c_i(y_i,y_{i+2}),\\
		\begin{array}{c}
			x_{i+1}\mapsto x_{i+2}a_{i+1}(y_{i+1},y_{i+2})\mapsto x_{i+2}a_{i+1}(y_i,y_{i+2}) \\ \mapsto x_{i+1}c_{i+1}^{-1}(y_{i+2},y_{i+1})a_{i+1}(y_i,y_{i+1}), 
		\end{array}  \\
		\begin{array}{c}
			x_{i+2} \mapsto x_{i+1}a_{i+1}^{-1}(y_{i+1},y_{i+2}) \mapsto x_ic_i^{-1}(y_{i+1},y_i)a_{i+1}^{-1}(y_i,y_{i+2}) \\ \mapsto x_ic_i^{-1}(y_{i+2},y_i)a_{i+1}^{-1}(y_i,y_{i+1}).
		\end{array}
	\end{cases}
	$$
	Thus, to fulfill the relation (\ref{eq7}), it is necessary and sufficient that 
	\begin{align}\label{a_and_c}
		a_{i+1}(y_i,y_{i+1})c_i(y_{i+2},y_i)=c_{i+1}(y_{i+2},y_{i+1})a_i(y_i, y_{i+1}),
	\end{align}
	holds for all $1\le i\le n-2$.
	
	A similar consideration of the relations (\ref{eq5}) leads to the equalities 
	\begin{align}\label{eqT1.1}
		c_{i+1}(y_i,y_{i+2})c_i(y_{i+1},y_{i+2})=c_{i+1}(y_{i+1},y_{i+2})c_i(y_i,y_{i+2}),
	\end{align}
	for all $1\le i\le n-2$. This is only possible if $c_i(y_i,y_{i+1})=y_{i+1}^{m_i}$ for some $m_i\in \mathbb{Z}$ with $1\le i\le n-1$. Using (\ref{a_and_c}) and (\ref{eqT1.1}) we get 
	\begin{align*}
		a_{i+1}(y_i,y_{i+1})=y_{i+1}^{m_{i+1}}a_i(y_i,y_{i+1})y_i^{-m_i}, \qquad 1\le i\le n-2.
	\end{align*}	
	The fulfillment of the relations (\ref{eq2}) is checked directly. 
	
	(2) Let us now show that the forbidden relations do not hold under the map $\Theta_n$.
	Indeed, we have: 
	$$
	\begin{gathered}
		y_{i} \overset{r_{i}} {\longmapsto}y_{i+1} \overset{s_{i+1}}{\longmapsto} y_{i+1} \overset{s_{i}}{\longmapsto} y_{i+1},\\
		y_{i} \overset{s_{i+1}}{\longmapsto} y_{i} \overset{s_{i}}{\longmapsto} y_{i} \overset{r_{i+1}}{\longmapsto} y_{i},
	\end{gathered}
	$$
	therefore,  $r_{i}s_{i+1}s_{i}\ne s_{i+1}s_{i}r_{i+1}$. Similarly
	$$
	\begin{gathered}
		y_{i} \overset{s_{i}}{\longmapsto} y_{i} \overset{s_{i+1}}{\longmapsto} y_{i} \overset{r_{i}}{\longmapsto} y_{i+1},\\
		y_{i} \overset{r_{i+1}}{\longmapsto} y_{i} \overset{s_{i}}{\longmapsto} y_{i} \overset{s_{i+1}}{\longmapsto} y_{i},
	\end{gathered}
	$$
	therefore, $s_{i}s_{i+1}r_{i}\ne r_{i+1}s_{i}s_{i+1}$.
\end{proof}

Thus the representation $\Theta_n$ given by the formula~(\ref{eq12}) depends on the word $a_1(A, B)=w(A, B) \in \mathbb F_2 = \langle A, B \rangle $, into which we substitute $y_i$ and $y_{i+1}$ instead of $A$ and $B$ respectively, and a set of integers $m=(m_1, \ldots, m_{n-1})$. To emphasize this dependence of the representation on $w$ and $m$, we denote it $\Theta_n^{w,m}$:
%\begin{adjustwidth}{-\extralength}{0cm}
\begin{equation}
	\Theta_n^{w,m}(\sigma_i):
	\begin{cases} \displaystyle 
		x_i \mapsto x_{i+1}\,  \prod_{k=i}^{2} y_{i+1}^{m_k} \, w(y_i, y_{i+1})  \, \prod_{k=1}^{i-1} y_i^{-m_k},\\ 
		\displaystyle 
		x_{i+1} \mapsto x_i  \, \prod_{k=i-1}^{1}  y_i^{m_k} \,  (w(y_i, y_{i+1}))^{-1} \, \prod_{k=2}^{i} y_{i+1}^{-m_k},
	\end{cases} \label{homodef1}
\end{equation}
	%\quad
	and
\begin{equation}
	\Theta_n^{w,m}(\rho_i):
	\begin{cases}
		x_i \mapsto x_{i+1} y_{i+1}^{m_i},\\
		x_{i+1} \mapsto x_{i} y_i^{-m_i},\\
		y_i \mapsto y_{i+1}, \\ 
		y_{i+1} \mapsto y_{i},
	\end{cases} \label{homodef2}
\end{equation}
%\end{adjustwidth}
where in the products $\displaystyle \prod_{k=i}^2$ and $\displaystyle \prod_{k=i-1}^1$ it is assumed that $i\geq 2$ and the indices are decreasing, and in the products $\displaystyle \prod_{k=1}^{i-1}$ and $\displaystyle \prod_{k=2}^{i}$ it is assumed $i \geq 2$ and the indices are increasing.

The word $w$ is called the \emph{defining word} for the homomorphism $\Theta_n^{w,m}$. In the particular case when $m_i=0$ for all $i=1, \ldots, n-1$, we will write $\Theta_n^w : FVB_{n} \to {\rm Aut}(\mathbb F_{2n})$ assuming that 
\begin{equation}
	\Theta_n^w(\sigma_i):
	\begin{cases}
		x_i \mapsto x_{i+1} w(y_i, y_{i+1}),\\
		x_{i+1} \mapsto x_i (w(y_i, y_{i+1}))^{-1},
	\end{cases}\quad
	\Theta_n^w(\rho_i):
	\begin{cases}
		x_i \mapsto x_{i+1},\\
		x_{i+1} \mapsto x_{i},\\
		y_i \mapsto y_{i+1}, \\ 
		y_{i+1} \mapsto y_{i}. 
	\end{cases}
\end{equation}
Note that the homomorphism (\ref{eq11}) constructed in~\cite{B23} can be represented as $\eta_{n}= \Theta^w_{n}$ for $w(A,B) =B$.

Let $\beta \in FVB_n$ and $x \in {\mathbb{F}}_{2n}$. Further, to simplify the notation, by $\beta(x)$ we mean $\Theta^{w,m}_n(\beta) (x)$, where the word $w$ and the set $m$ are assumed to be clear from the context.

\section{The kernel of homomorphism $\Theta_n^{w,m}$ and $FVK_n$ group} \label{section2}

In this section we show that the kernel of the homomorphism $\Theta_{n}^{w,m}$ lies in the intersection of the group of flat virtual pure braids and the group of flat virtual kure braids group $FVK_n$ defined below.

Consider the subgroup ${\rm S}_n =\langle\sigma_1\ldots\sigma_{n-1}\rangle$ of $FVB_n$, which is isomorphic to the permutation group of an $n$-element set. The map $\pi_n : FVB_n \to {\rm{S}}_n$ defined on the generators $\sigma_i, \rho_i$ according to the rule:
\begin{align*}
	\pi_n(\sigma_{i})&=\sigma_{i}, &1\le i\le n-1, \\
	\pi_n(\rho_{i})&=\sigma_{i}, &1\le i\le n-1,
\end{align*}
is obviously a homomorphism. 

\begin{definition}
Denote $FVP_n = \operatorname{Ker} (\pi_n)$ and call it  \emph{flat virtual pure braid group on $n$ strands}.
\end{definition}

Similarly, the subgroup $S'_n=\langle\rho_1\ldots\rho_{n-1}\rangle$ of $FVB_n$ is isomorphic to the permutation group of an $n$-element set, and the map $\nu_n : FVB_n\to {\rm {S'_n}}$ defined on generators $\sigma_{i}$, $\rho_{i}$ as follows:
\begin{align*}
	\nu_n(\sigma_{i}^{})&=1, &1\le i\le n-1, \\
	\nu_n(\rho_{i}^{})&=\rho_{i}, &1\le i\le n-1,
\end{align*}
is also a homomorphism.

\begin{definition}
 Denote $FVK_n = \operatorname{Ker} (\nu_n)$ and call it \emph{flat virtual kure braid group on $n$ strands}.
  \end{definition}
 
Here we use the term \emph{flat virtual kure braid} since the term \emph{kure virtual braid group} was used in~\cite{BPT} for kernel of the map $\pi_K : VB_n \to S_n$ which is defined analogously to $\nu_n : FVB_n \to S'$. The group $FVK_n = \operatorname{Ker} (\nu_n)$ also was denoted by  $FH_n$ in \cite{B23} since is the flat analog of the Rabenda's group $H_n$ from~\cite[Prop.~17]{BBD}. 

\begin{lemma} \cite[Prop.~4]{B23}
	The group $FVK_n$ admits a presentation with generators $x_{i,j}$, $1 \leq i \neq j \leq n$ and defining relations
	\begin{equation}
		x_{i,j}^2 = 1, \qquad
		x_{i,j} \, x_{k,l} = x_{k,l} \, x_{i,j}, \qquad
		x_{i,k} \, x_{k,j} \, x_{i,k} = x_{k,j} \, x_{i,k} \, x_{k.j},
	\end{equation}
	where different letters stand for different indices.
\end{lemma}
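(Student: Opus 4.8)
The plan is to run the Reidemeister--Schreier rewriting process on the split extension $1 \to FVK_n \to FVB_n \to S'_n \to 1$, where the splitting is the inclusion $S'_n = \langle\rho_1,\dots,\rho_{n-1}\rangle \hookrightarrow FVB_n$ (indeed $\nu_n$ restricts to the identity on $S'_n$). Thanks to this splitting one may take a Schreier transversal contained in $S'_n$: for each $\pi\in S'_n$ fix a reduced word $s(\pi)$ in the $\rho_i$ representing $\pi$, chosen so that the set $\{\,s(\pi)\,\}$ is prefix-closed.

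First I would list the Schreier generators. For $t = s(\pi)$ and a generator $g$ of $FVB_n$ the associated generator is $t\,g\,s(\pi\,\nu_n(g))^{-1}$. If $g=\rho_i$ this element lies in $S'_n$ and is killed by $\nu_n$, hence is trivial; if $g=\sigma_i$ then $\nu_n(g)=1$ and the generator is the conjugate $s(\pi)\,\sigma_i\,s(\pi)^{-1}$. Thus $FVK_n$ is generated by $\{\,s(\pi)\,\sigma_i\,s(\pi)^{-1}\;:\;\pi\in S'_n,\ 1\le i\le n-1\,\}$, and in particular every $\sigma_i$ lies in $FVK_n$.

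The crucial step is to cut this generating set down to size $n(n-1)$. Relation~(\ref{eq6}) gives $\rho_j\sigma_i\rho_j=\sigma_i$ for $|i-j|\ge2$, and relation~(\ref{eq7}) gives $\rho_i\rho_{i+1}\sigma_i\rho_{i+1}\rho_i=\sigma_{i+1}$ and $\rho_{i+1}\sigma_i\rho_{i+1}=\rho_i\sigma_{i+1}\rho_i$; iterating these (induction on the length of $\pi$ as a $\rho$-word, reducing to multiplication by a single $\rho_j$) shows that $s(\pi)\,\sigma_i\,s(\pi)^{-1}$ depends only on the ordered pair $(\pi(i),\pi(i+1))$. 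One may then define $x_{a,b}:=s(\pi)\,\sigma_i\,s(\pi)^{-1}$ for any $\pi,i$ with $\pi(i)=a$, $\pi(i+1)=b$; this is well defined for all ordered pairs $a\neq b$, it recovers $x_{i,i+1}=\sigma_i$, and $\{\,x_{i,j}:1\le i\ne j\le n\,\}$ generates $FVK_n$. I expect this reduction --- verifying that conjugation by one $\rho_j$ permutes the $\sigma$-conjugates exactly according to the underlying permutation, using only~(\ref{eq6}) and~(\ref{eq7}) together with $\rho_j^2=1$ --- to be the main obstacle, since it is here that all the mixed relations of $FVB_n$ are brought into play.

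Finally I would read off the relations. By the Reidemeister--Schreier theorem, $FVK_n$ is presented by the above generators subject to the rewrites of $s(\pi)\,R\,s(\pi)^{-1}$ as $\pi$ runs over $S'_n$ and $R$ over the defining relators of $FVB_n$. The purely $\rho$-relators rewrite to the empty word, and the mixed relators have already been used to identify the generators $x_{a,b}$. Conjugating $\sigma_i^2=1$ by $s(\pi)$ gives $x_{a,b}^2=1$; conjugating $\sigma_i\sigma_j=\sigma_j\sigma_i$ with $|i-j|\ge2$ gives $x_{a,b}x_{c,d}=x_{c,d}x_{a,b}$ with $a,b,c,d$ pairwise distinct, and every such instance occurs; conjugating $\sigma_i\sigma_{i+1}\sigma_i=\sigma_{i+1}\sigma_i\sigma_{i+1}$ gives, with $a=\pi(i)$, $b=\pi(i+1)$, $c=\pi(i+2)$, the relation $x_{a,b}x_{b,c}x_{a,b}=x_{b,c}x_{a,b}x_{b,c}$, which is the stated relation $x_{i,k}x_{k,j}x_{i,k}=x_{k,j}x_{i,k}x_{k,j}$ after renaming, and again every instance occurs. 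Collecting these yields exactly the asserted presentation. (Alternatively, one may observe that $FVK_n$ is the flat analogue of Rabenda's group $H_n$ and repeat the computation of~\cite{BBD, BPT} verbatim with the extra relations $\sigma_i^2=1$.)
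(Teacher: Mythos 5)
The paper offers no proof of this lemma --- it is imported verbatim from \cite[Prop.~4]{B23} --- but your Reidemeister--Schreier argument through the split extension $1 \to FVK_n \to FVB_n \to S'_n \to 1$ is exactly the method of that source (it is the flat analogue of Rabenda's presentation of $H_n$ in \cite{BBD}, obtained by adding $\sigma_i^2=1$), and it is the same machinery the present paper deploys later for $X_3$ in Theorem~\ref{FVHP_3}. Your reduction of the $n!\,(n-1)$ Schreier generators to the $n(n-1)$ elements $x_{a,b}$, and the sorting of the rewritten relators into the three stated families, are correct; the one step you rightly flag as the crux --- that $s(\pi)\,\sigma_i\,s(\pi)^{-1}$ depends only on $(\pi(i),\pi(i+1))$ --- is cleanest if you first fix explicit defining words for the $x_{a,b}$ (as the paper does for the $\lambda_{i,j}$) and then verify $\rho_j\,x_{a,b}\,\rho_j = x_{(j\,j+1)a,\,(j\,j+1)b}$ by a finite check using (\ref{eq3})--(\ref{eq7}), which makes your induction on the length of the $\rho$-word go through.
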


\begin{corollary}
	The group $FVK_n$ is a Coxeter group with defining relations
	\begin{equation}
		x_{i,j}^2 = 1, \qquad
		(x_{i,j} \, x_{k,l})^2 = 1, \qquad
		(x_{i,k} \, x_{k,j} )^3 =1.
	\end{equation}
\end{corollary}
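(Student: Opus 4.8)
The plan is to derive the Corollary from the Lemma by a purely presentation-theoretic manipulation, using the standard fact that among involutions a braid relation is equivalent to a cube relation and a commutation relation is equivalent to a square relation. So I keep the generating set $\{x_{i,j} : 1 \le i \ne j \le n\}$ together with the relations $x_{i,j}^2 = 1$ from the Lemma, and argue that, modulo these, the family $x_{i,j}x_{k,l} = x_{k,l}x_{i,j}$ is equivalent to the family $(x_{i,j}x_{k,l})^2 = 1$, while the family $x_{i,k}x_{k,j}x_{i,k} = x_{k,j}x_{i,k}x_{k,j}$ is equivalent to the family $(x_{i,k}x_{k,j})^3 = 1$.

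Both equivalences are elementary identities valid in any group containing elements $a,b$ with $a^2 = b^2 = 1$, where one may replace $a^{-1}$ by $a$ and $b^{-1}$ by $b$ freely. First, $ab = ba$ holds iff $abab = 1$ iff $(ab)^2 = 1$: one direction is substitution ($abab = a(ba)b = a(ab)b = a^2b^2 = 1$), the other is taking inverses ($ab = (ab)^{-1} = b^{-1}a^{-1} = ba$). Second, $aba = bab$ holds iff $ababab = 1$ iff $(ab)^3 = 1$: if $aba = bab$ then $(ab)^3 = (aba)(bab) = (bab)^2 = 1$ after cancelling $b^2$ and $a^2$, and conversely $(ab)^3 = 1$ gives $aba = (bab)^{-1} = b^{-1}a^{-1}b^{-1} = bab$. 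Applying the first identity with $a = x_{i,j}$, $b = x_{k,l}$ and the second with $a = x_{i,k}$, $b = x_{k,j}$ yields the claimed relation-by-relation equivalence, with exactly the index patterns dictated by the Lemma.

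Consequently the group of the Lemma and the group $\langle x_{i,j} \mid x_{i,j}^2 = 1,\ (x_{i,j}x_{k,l})^2 = 1,\ (x_{i,k}x_{k,j})^3 = 1 \rangle$ are the same quotient of the free group on $\{x_{i,j}\}$, and hence both equal $FVK_n$. This last presentation is literally a Coxeter presentation: its generating set consists of involutions, and every other relator has the form $(st)^{m_{st}}$ with $m_{st} \in \{2,3\}$ --- namely $m_{st} = 2$ when $s = x_{i,j}$, $t = x_{k,l}$ have four distinct indices, $m_{st} = 3$ when $s = x_{i,k}$, $t = x_{k,j}$ share exactly the middle index, and $m_{st} = \infty$ (no relation) for the remaining pairs, such as $\{x_{i,j}, x_{i,k}\}$ or $\{x_{i,j}, x_{j,i}\}$. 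Therefore $FVK_n$ is a Coxeter group. The argument is essentially a translation; the only point requiring care is the bookkeeping of which ordered index patterns produce $m_{st} = 2$, which produce $m_{st} = 3$, and which produce no relation, and this matches the ``different letters stand for different indices'' convention of the Lemma verbatim --- so I anticipate no real obstacle.
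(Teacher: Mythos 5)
Your proof is correct and is exactly the standard argument the paper relies on (the corollary is stated without proof as an immediate consequence of the lemma): for involutions, commutation is equivalent to $(ab)^2=1$ and the braid relation to $(ab)^3=1$. Your extra bookkeeping about which index patterns give $m_{st}=2$, $3$, or $\infty$ is accurate and consistent with the lemma's convention.
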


The following property is a generalization of the property established in \cite[Prop.~9]{B23} for the word $w (A, B) = B$.

\begin{lemma} \label{lemma3.1}
	Let $n \geq 2$. For any word $w \in{\mathbb F_2}$ and any set of integers $m = (m_1, \ldots, m_{n-1})$ $\operatorname{Ker}(\Theta_n^{w,m }) \le FVP_n \cap FVK_n$.
\end{lemma}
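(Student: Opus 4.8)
The plan is to show that any element in the kernel of $\Theta_n^{w,m}$ maps to the identity under both $\pi_n$ and $\nu_n$. Since $FVP_n = \operatorname{Ker}(\pi_n)$ and $FVK_n = \operatorname{Ker}(\nu_n)$, this gives the containment $\operatorname{Ker}(\Theta_n^{w,m}) \subseteq FVP_n \cap FVK_n$ immediately. The key observation is that both quotient maps $\pi_n$ and $\nu_n$ record a permutation (of the strands), and this permutation can be read off from how $\Theta_n^{w,m}(\beta)$ permutes the generators $x_1,\dots,x_n$ and $y_1,\dots,y_n$ "up to correction words'' and "exactly'', respectively.

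First I would make precise the underlying permutation data. For a generator $\sigma_i$ or $\rho_i$, the automorphism $\Theta_n^{w,m}(\sigma_i)$ sends $x_i \mapsto x_{i+1}(\cdots)$, $x_{i+1}\mapsto x_i(\cdots)$ and fixes the other $x_j$; likewise $\Theta_n^{w,m}(\rho_i)$ sends $x_i\mapsto x_{i+1}(\cdots)$, $x_{i+1}\mapsto x_i(\cdots)$, and it acts on the $y$'s by the honest transposition $y_i\leftrightarrow y_{i+1}$, fixing other $y_j$. Thus there are two "permutation records'': (a) for every $\beta\in FVB_n$, abelianizing the free group and killing all $y$'s, $\Theta_n^{w,m}(\beta)$ induces on $\mathbb{Z}^n = \langle \bar x_1,\dots,\bar x_n\rangle$ a signed permutation matrix whose underlying permutation is exactly $\pi_n(\beta)$ — because both $\sigma_i$ and $\rho_i$ act on the $x$-indices by the transposition $(i,i+1)$, matching the definition of $\pi_n$; (b) restricting the action to the subgroup $\langle y_1,\dots,y_n\rangle$, only the $\rho_i$ act nontrivially, each by the transposition $(i,i+1)$ on the $y$-indices, while the $\sigma_i$ fix every $y_j$; this is exactly the behaviour defining $\nu_n$, so the permutation of the $y$-indices induced by $\Theta_n^{w,m}(\beta)$ is exactly $\nu_n(\beta)$ (viewed in $S'_n$ via $\rho_i\mapsto\rho_i$).

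Now suppose $\beta\in\operatorname{Ker}(\Theta_n^{w,m})$, i.e. $\Theta_n^{w,m}(\beta) = \operatorname{id}$. From record (b), the identity automorphism induces the trivial permutation on the $y$-indices, so $\nu_n(\beta)=1$, hence $\beta\in FVK_n$. From record (a), the identity automorphism induces the trivial signed permutation on $\mathbb{Z}^n$, so in particular its underlying permutation is trivial, hence $\pi_n(\beta)=1$ and $\beta\in FVP_n$. Therefore $\beta\in FVP_n\cap FVK_n$, as required.

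The only real content is verifying claim (a) — that the quotient of the free group by $[\mathbb{F}_{2n},\mathbb{F}_{2n}]$ together with all $y_j$ gives a well-defined $S_n$-action matching $\pi_n$; this requires checking that the correction words $a_i(y_i,y_{i+1})$ etc. die in this quotient, which they do since they are words in the $y$'s alone, and that $\sigma_i,\rho_i$ both act by $(i,i+1)$ on $\{\bar x_1,\dots,\bar x_n\}$. Claim (b) is even more transparent since the $y$-action is literally by permutation and the homomorphism property is already guaranteed by Theorem~\ref{theorem1.1}(1). I do not expect any genuine obstacle here; the statement is essentially a compatibility check between the explicit formulas~(\ref{homodef1})--(\ref{homodef2}) and the definitions of $\pi_n$ and $\nu_n$. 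One could alternatively phrase the whole argument by exhibiting explicit homomorphisms $\mathrm{Aut}(\mathbb{F}_{2n})\supseteq \operatorname{Im}(\Theta_n^{w,m}) \to S_n$ and $\to S'_n$ through which $\pi_n$ and $\nu_n$ factor, but the index-bookkeeping version above is the most direct.
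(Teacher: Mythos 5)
Your argument is correct and follows essentially the same route as the paper: read off $\nu_n(\beta)$ from the (honest) permutation action on $y_1,\dots,y_n$, and read off $\pi_n(\beta)$ from the induced action on the quotient of $\mathbb{F}_{2n}$ by the $y$'s, where the correction words vanish. The only cosmetic difference is that you additionally abelianize to land in $\mathbb{Z}^n$, whereas the paper quotients only by the normal closure of $\langle y_1,\dots,y_n\rangle$ to get an induced homomorphism to $\operatorname{Aut}(\mathbb{F}_n)$; both quotients are invariant and the conclusion is identical.
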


\begin{proof}
	Let $g \in \operatorname{Ker}(\Theta_n^{w,m})$. Then $y_i=g(y_i)=\nu_n(g)(y_i)$, since all $\sigma_i$ act identically on $y_i$. But then $\nu_n(g)$ is the identity permutation of the set $\{y_1, \ldots, y_n\}$, which by definition means $g\in FVK_n$.
	
	Next, we show that $g\in FVP_n$. Denote by $G$ the normal closure of the subgroup $\langle y_1,\ldots, y_n\rangle$ in ${\mathbb F}_{2n}$. It is clear that $G$ is a $\Theta_n^{w,m}(FVB_n)$--invariant subgroup. Then $\Theta_n^{w,m}$ induces a homomorphism $\Psi_n^{w,m}\colon FVB_n\to {\rm Aut}({\mathbb F}_{2n}/ G)={\rm Aut }({\mathbb F}_n)$, where ${\mathbb F}_n=\langle x_1,\ldots,x_n\rangle$. From the formulas (\ref{eq12}) we can write out the action of $\Psi_n^{w,m}$ on the generators of the group $FVB_n$:
	\begin{align*}
		\Psi_n^{w,m}(\sigma_i)\colon \begin{cases}
			x_i\mapsto x_{i+1},\\
			x_{i+1}\mapsto x_i,
		\end{cases}\qquad
		\Psi_n^{w,m}(\rho_i)\colon \begin{cases}
			x_i\mapsto x_{i+1},\\
			x_{i+1}\mapsto x_i,
		\end{cases}\qquad
		1\le i \le n-1.
	\end{align*}
	Now it is easy to see that the image of $FVB_n$ under the map $\Psi_n^{w,m}$ is a permutation of the set $\{x_1,\ldots,x_n\}$. It remains to note that if $g\in \operatorname{Ker}(\Theta_n^{w,m})$, then $g \in \operatorname{Ker}(\Psi_n^{w,m})=FVP_n$.
\end{proof}

Since $S'_n\le FVB_n$, then the decomposition of $FVB_n = FVK_n \rtimes S'_n$ follows directly from the definition of $FVK_n$. Considering the restriction of the homomorphism $\pi_n$ to $FVK_n$, we obtain the homomorphism $\xi_n\colon FVK_n\to S_n$. Note that its kernel is $X_n= \operatorname{Ker}(\xi_n)=FVP_n \cap FVK_n$. Further, since $S'_n\le FVK_n$, we obtain the decomposition  $FVK_n=X_n \rtimes S_n$. Thus, we have the following decomposition of the group of flat virtual braids:
$$
FVB_n=(X_n\rtimes S_n)\rtimes S'_n.
$$

As it invented in~\cite{BBD}, we denote
\begin{align*}
	&\lambda_{i,i+1}=\rho_i\sigma_i, &1\le i\le n-1,\\
	&\lambda_{i,j}=\rho_{j-1}\rho_{j-2}\ldots\rho_{i+1}\lambda_{i,i+1}\rho_{i+1}\ldots \rho_{j-2}\rho_{j-1}, &j-i\ge 2.
\end{align*}
%\begin{figure}[h!]
%	\begin{center}
%		\includegraphics[scale=0.5]{2.jpg}\caption{Element $\lambda_{i,j}$. Here we draw braid from left to right.}
%	\end{center} 
%\end{figure}
Element $\lambda_{i,j}$ is presented geometrically in Figure~\ref{fig7}, 
\begin{figure}[ht]
\begin{center}
\unitlength=1.0mm
\begin{picture}(60,80)(0,3)
\thicklines 
\put(0,75){\makebox(0,0)[c]{\small $i$}}
\put(10,75){\makebox(0,0)[c]{\small $i+1$}}
\put(20,75){\makebox(0,0)[c]{\small $i+2$}}
\put(40,75){\makebox(0,0)[c]{\small $j-2$}}
\put(50,75){\makebox(0,0)[c]{\small $j-1$}}
\put(60,75){\makebox(0,0)[c]{\small $j$}}
%1%
\qbezier(0,10)(0,10)(0,15)
\qbezier(10,10)(10,10)(10,15)
\qbezier(20,10)(20,10)(20,15)
\qbezier(40,10)(40,10)(40,15)
\qbezier(50,10)(50,10)(60,15)
\qbezier(50,15)(50,15)(60,10)
\put(55,12.5){\circle{1.4}}
%2%
\qbezier(0,15)(0,10)(0,20)
\qbezier(10,15)(10,15)(10,20)
\qbezier(20,15)(20,15)(20,20)
\qbezier(40,15)(40,15)(50,20)
\qbezier(40,20)(40,20)(50,15)
\qbezier(60,15)(60,15)(60,20)
\put(45,17.5){\circle{1,4}}
\put(0,25){\makebox(0,0)[cc]{$\vdots$}}
\put(10,25){\makebox(0,0)[cc]{$\vdots$}}
\put(20,25){\makebox(0,0)[cc]{$\vdots$}}
\put(40,25){\makebox(0,0)[cc]{$\vdots$}}
\put(50,25){\makebox(0,0)[cc]{$\vdots$}}
\put(60,25){\makebox(0,0)[cc]{$\vdots$}}
%3%
\qbezier(0,30)(0,30)(0,35)
\qbezier(10,30)(10,30)(20,35)
\qbezier(10,35)(10,35)(20,30)
\qbezier(40,30)(40,30)(40,35)
\qbezier(50,30)(50,30)(50,35)
\qbezier(60,30)(60,30)(60,35)
\put(15,32.5){\circle{1,4}}
%4%
\qbezier(0,35)(0,35)(10,40)
\qbezier(10,35)(10,35)(0,40)
\qbezier(20,35)(20,35)(20,40)
\qbezier(40,35)(40,35)(40,40)
\qbezier(50,35)(50,35)(50,40)
\qbezier(60,35)(60,35)(60,40)
%5%
\qbezier(0,40)(0,40)(10,45)
\qbezier(10,40)(10,40)(0,45)
\qbezier(20,40)(20,40)(20,45)
\qbezier(40,40)(40,40)(40,45)
\qbezier(50,40)(50,40)(50,45)
\qbezier(60,40)(60,40)(60,45)
\put(5,42.5){\circle{1,4}}
%6
\qbezier(0,45)(0,45)(0,50)
\qbezier(10,45)(10,45)(20,50)
\qbezier(10,50)(10,50)(20,45)
\qbezier(40,45)(40,45)(40,50)
\qbezier(50,45)(50,45)(50,50)
\qbezier(60,45)(60,45)(60,50)
\put(15,47.5){\circle{1,4}}
\put(0,55){\makebox(0,0)[cc]{$\vdots$}}
\put(10,55){\makebox(0,0)[cc]{$\vdots$}}
\put(20,55){\makebox(0,0)[cc]{$\vdots$}}
\put(40,55){\makebox(0,0)[cc]{$\vdots$}}
\put(50,55){\makebox(0,0)[cc]{$\vdots$}}
\put(60,55){\makebox(0,0)[cc]{$\vdots$}}
%7
\qbezier(0,60)(0,60)(0,65)
\qbezier(10,60)(10,60)(10,65)
\qbezier(20,60)(20,60)(20,65)
\qbezier(40,60)(40,60)(50,65)
\qbezier(40,65)(40,65)(50,60)
\qbezier(60,60)(60,60)(60,65)
\put(45,62.5){\circle{1,4}}
%8
\qbezier(0,65)(0,65)(0,70)
\qbezier(10,65)(10,65)(10,70)
\qbezier(20,65)(20,65)(20,70)
\qbezier(40,65)(40,65)(40,70)
\qbezier(50,65)(50,65)(60,70)
\qbezier(50,70)(50,70)(60,65)
\put(55,67.5){\circle{1.4}}
\put(0,10){\circle*{1}}
\put(10,10){\circle*{1}}
\put(20,10){\circle*{1}}
\put(40,10){\circle*{1}}
\put(50,10){\circle*{1}}
\put(60,10){\circle*{1}}
\put(0,70){\circle*{1}}
\put(10,70){\circle*{1}}
\put(20,70){\circle*{1}}
\put(40,70){\circle*{1}}
\put(50,70){\circle*{1}}
\put(60,70){\circle*{1}}
\put(30,40){\makebox(0,0)[c]{$\cdots$}}
\end{picture} \caption{Element $\lambda_{i,j} \in FVB_n$.} \label{fig7}
\end{center} 
\end{figure}
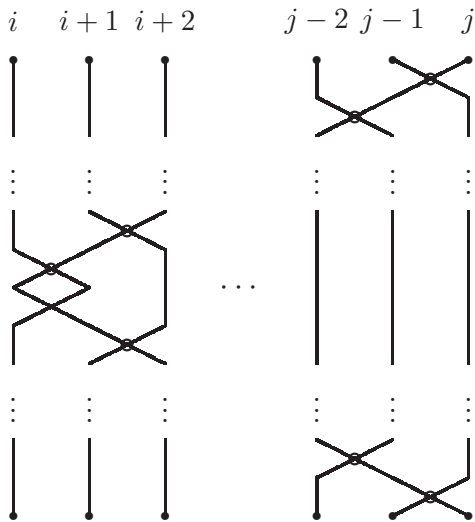

\begin{lemma} \cite{BBD}
	The group $FVP_n$ is generated by the elements $\lambda_{i,j},~1\le i<j\le n$ and the defining relations are:
	\begin{align}
		\lambda_{i,j}\lambda_{k,l}&=\lambda_{k,l}\lambda_{i,j}, \label{eq101} \\
		\lambda_{k,i}\lambda_{k,j}\lambda_{i,j}&=\lambda_{i,j}\lambda_{k,j}\lambda_{k,i}, \label{eq102}
	\end{align}
	where $i,j,k,l$ correspond to different indices.
\end{lemma}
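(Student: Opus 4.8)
The plan is to derive the presentation by the Reidemeister--Schreier method applied to the short exact sequence $1 \to FVP_n \to FVB_n \to S_n \to 1$ coming from $\pi_n \colon FVB_n \to S_n$. Since $\pi_n$ restricts to an isomorphism of the subgroup $S'_n = \langle \rho_1, \ldots, \rho_{n-1}\rangle$ onto $S_n$ (the $\rho_i$ in $FVB_n$ satisfy exactly the defining relations of $S_n$), the subgroup $S'_n$ is a complement to $FVP_n$, so $FVB_n = FVP_n \rtimes S'_n$, and we may take as Schreier transversal for $FVP_n$ a prefix-closed set $T$ of reduced words in the $\rho_i$, one for each permutation. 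First I would record the elementary facts. Each $\lambda_{i,j}$ lies in $FVP_n$: indeed $\pi_n(\rho_i\sigma_i) = \sigma_i^2 = 1$ in $S_n$, and $\lambda_{i,j}$ is a conjugate of $\lambda_{i,i+1}$ by a word in the $\rho_k$, so its image is a conjugate of the identity permutation. Next, $FVB_n = \langle S'_n, \lambda_{i,j}\rangle$ because $\sigma_i = \rho_i\lambda_{i,i+1}$ (using $\rho_i^2=1$). Combined with $FVB_n = FVP_n \rtimes S'_n$, this shows that $FVP_n$ is the normal closure of $\{\lambda_{i,j}\}$ in $FVB_n$; and a direct computation with the symmetric-group relations among the $\rho_k$ shows that $\rho_k \lambda_{i,j}\rho_k$ is again a word in the $\lambda_{\cdot,\cdot}$ (this is exactly why the $\lambda_{i,j}$ with $1\le i<j\le n$ already suffice, the definition of $\lambda_{i,j}$ for $j-i\ge 2$ being one instance of such a rewriting). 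Hence the subgroup generated by the $\lambda_{i,j}$ is normal and equals $FVP_n$, giving the claimed generating set.

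For the defining relations I would run the rewriting process. The Reidemeister--Schreier generators are the elements $\overline{t\gamma}$ for $t\in T$ and $\gamma\in\{\sigma_i,\rho_i\}$. With the chosen transversal every $\overline{t\rho_i}$ equals $1$ (since $t\rho_i$ and its normal form represent the same element of $S'_n$), so all generators arise from the $\overline{t\sigma_i}$; writing $\sigma_i = \rho_i\lambda_{i,i+1}$ and moving the transversal words to the right exhibits each $\overline{t\sigma_i}$ as a product of conjugates, by elements of $S'_n$, of the $\lambda_{a,b}$, and a sequence of Tietze moves reduces the full set of RS generators to $\{\lambda_{i,j} : 1\le i<j\le n\}$. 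The relations of $FVP_n$ are then obtained by rewriting each relator of $FVB_n$ — the commutation and braid relations for the $\sigma_i$, those for the $\rho_i$, the mixed relations (\ref{eq7}) and (\ref{eq6}), and the involutivity relations $\sigma_i^2$, $\rho_i^2$ — conjugated by every $t\in T$, and simplifying. The purely-$\rho$ relators and the far-commutation relators rewrite to the commutation relations (\ref{eq101}); the relators involving three consecutive strands (the braid relation (\ref{eq2}) and the mixed relation (\ref{eq7})) rewrite, after simplification, to the triangle relations (\ref{eq102}).

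The main obstacle is completeness: one must show that \emph{every} rewritten relator is a consequence of (\ref{eq101})--(\ref{eq102}), in particular that the relators $\sigma_i^2$ (present because $FVB_n$ is flat) contribute nothing new. The most efficient way to organize this bookkeeping, and the one I would follow, is to borrow the analogous computation for the virtual pure braid group $VP_n$: Bardakov's presentation gives $VP_n$ on generators $\lambda_{i,j}$ ($1\le i\ne j\le n$) with relations of types (\ref{eq101})--(\ref{eq102}); since $FVB_n = VB_n/\langle\langle\sigma_i^2\rangle\rangle$ and the maps to $S_n$ are compatible, $FVP_n$ is the quotient of $VP_n$ by the normal closure of the Reidemeister--Schreier rewrites of the $\sigma_i^2$, which one computes to be exactly the identifications $\lambda_{i,j}=\lambda_{j,i}$ (geometrically visible in Figure~\ref{fig7} once over/under crossings are erased); a short Tietze argument then collapses Bardakov's presentation under these identifications precisely to the presentation in the statement. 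Since the lemma is quoted from \cite{BBD}, in the paper itself it suffices to cite that reference, the argument above being the route one would take to reprove it.
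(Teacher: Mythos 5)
The paper offers no proof of this lemma at all --- it is quoted from \cite{BBD} --- so there is nothing internal to compare against; your Reidemeister--Schreier plan (split extension $1\to FVP_n\to FVB_n\to S_n\to 1$, transversal inside $S'_n=\langle\rho_1,\dots,\rho_{n-1}\rangle$, then a Tietze collapse onto the $\lambda_{i,j}$) is exactly the route taken in the cited source and in Bardakov's computation for $VP_n$, and your identification of $FVP_n$ with the normal closure of the $\lambda_{i,j}$ and the reduction of the generating set are fine.

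There is, however, one step that fails as stated: the rewrites of the relators $\sigma_i^2$ are \emph{not} the identifications $\lambda_{i,j}=\lambda_{j,i}$. With $\lambda_{i,i+1}=\rho_i\sigma_i$ and $\lambda_{i+1,i}:=\rho_i\lambda_{i,i+1}\rho_i=\sigma_i\rho_i$, the relation $\sigma_i^2=1$ gives $\lambda_{i,i+1}\lambda_{i+1,i}=\rho_i\sigma_i^2\rho_i=1$, that is $\lambda_{j,i}=\lambda_{i,j}^{-1}$. (Your version $\lambda_{j,i}=\lambda_{i,j}$ would force $\rho_i$ and $\sigma_i$ to commute, hence $t_{i,i+1}=\lambda_{i,i+1}^2=1$, which contradicts the nontriviality of $\Theta_2^{w,m}(t_{1,2})$ established in Theorem~\ref{theorem2.5}.) The sign is not cosmetic: eliminating the generators $\lambda_{j,i}$, $j>i$, from the $VP_n$-type presentation via $\lambda_{j,i}\mapsto\lambda_{i,j}^{-1}$ is precisely what converts the relations indexed by arbitrary ordered tuples into the family (\ref{eq101})--(\ref{eq102}) indexed by increasing tuples; with your identification the Tietze collapse lands on a different, incorrect relation set. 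A minor further inaccuracy: for your transversal every Reidemeister--Schreier generator $t\rho_i(\overline{t\rho_i})^{-1}$ is trivial, so the purely-$\rho$ relators rewrite to empty words and contribute nothing; the commutations (\ref{eq101}) come from the far-commutation relators involving the $\sigma$'s, while (\ref{eq2}) and (\ref{eq7}) give (\ref{eq102}) as you say. Neither point changes the architecture, but the first must be corrected for the argument to reach the stated presentation.
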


Let us consider the case $n=3$ in more details. As shown in~\cite{BBD},
\begin{align}
	{\rm{FVP}}_3=\langle a,b,c\mid[a,c]=1\rangle, \label{three_gen_FVP}
\end{align}
where $a=\lambda_{2,3}\lambda_{1,3}$, $b=\lambda_{2,3}$ and $c=\lambda_{2,3}^{}\lambda_{1, 2}^{-1}$. These elements presented geometrically in Figure~\ref{fig8}.

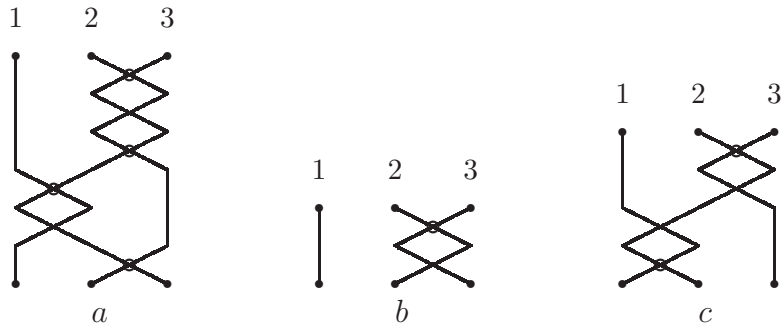
\begin{figure}[ht]
\begin{center}
\unitlength=1.0mm
\begin{picture}(100,43)(0,3)
\thicklines 
\put(0,45){\makebox(0,0)[c]{\small $1$}}
\put(10,45){\makebox(0,0)[c]{\small $2$}}
\put(20,45){\makebox(0,0)[c]{\small $3$}}
%1%
\qbezier(0,10)(0,10)(0,15)
\qbezier(10,10)(10,10)(20,15)
\qbezier(20,10)(20,10)(10,15)
\put(15,12.5){\circle{1,4}}
%2%
\qbezier(0,15)(0,15)(10,20)
\qbezier(10,15)(10,15)(0,20)
\qbezier(20,15)(20,15)(20,20)
%3%
\qbezier(0,20)(0,20)(10,25)
\qbezier(10,20)(10,20)(0,25)
\qbezier(20,20)(20,20)(20,25)
\put(5,22.5){\circle{1,4}}
%4%
\qbezier(0,25)(0,25)(0,30)
\qbezier(10,25)(10,25)(20,30)
\qbezier(20,25)(20,25)(10,30)
\put(15,27.5){\circle{1,4}}
%5%
\qbezier(0,30)(0,30)(0,35)
\qbezier(10,30)(10,30)(20,35)
\qbezier(20,30)(20,30)(10,35)
%6
\qbezier(0,35)(0,35)(0,40)
\qbezier(10,35)(10,35)(20,40)
\qbezier(20,35)(20,35)(10,40)
\put(15,37.5){\circle{1,4}}
\put(10,5){$a$}
%%%
\put(40,25){\makebox(0,0)[c]{\small $1$}}
\put(50,25){\makebox(0,0)[c]{\small $2$}}
\put(60,25){\makebox(0,0)[c]{\small $3$}}
%1%
\qbezier(40,10)(40,10)(40,15)
\qbezier(50,10)(50,10)(60,15)
\qbezier(60,10)(60,10)(50,15)
%2%
\qbezier(40,15)(40,15)(40,20)
\qbezier(50,15)(50,15)(60,20)
\qbezier(60,15)(60,15)(50,20)
\put(55,17.5){\circle{1,4}}
%3%
\put(50,5){$b$}
%%%
\put(80,35){\makebox(0,0)[c]{\small $1$}}
\put(90,35){\makebox(0,0)[c]{\small $2$}}
\put(100,35){\makebox(0,0)[c]{\small $3$}}
%1%
\qbezier(80,10)(80,10)(90,15)
\qbezier(90,10)(90,10)(80,15)
\qbezier(100,10)(100,10)(100,15)
\put(85,12.5){\circle{1,4}}
%2%
\qbezier(80,15)(80,15)(90,20)
\qbezier(90,15)(90,15)(80,20)
\qbezier(100,15)(100,15)(100,20)
%\put(55,17.5){\circle{1,4}}
%3%
\qbezier(80,20)(80,20)(80,25)
\qbezier(90,20)(90,20)(100,25)
\qbezier(100,20)(100,20)(90,25)
%4
\qbezier(80,25)(80,25)(80,30)
\qbezier(90,25)(90,25)(100,30)
\qbezier(100,25)(100,25)(90,30)
\put(95,27.5){\circle{1,4}}
\put(0,10){\circle*{1}}
\put(10,10){\circle*{1}}
\put(20,10){\circle*{1}}
\put(40,10){\circle*{1}}
\put(50,10){\circle*{1}}
\put(60,10){\circle*{1}}
\put(80,10){\circle*{1}}
\put(90,10){\circle*{1}}
\put(100,10){\circle*{1}}
\put(0,40){\circle*{1}}
\put(10,40){\circle*{1}}
\put(20,40){\circle*{1}}
\put(40,20){\circle*{1}}
\put(50,20){\circle*{1}}
\put(60,20){\circle*{1}}
\put(80,30){\circle*{1}}
\put(90,30){\circle*{1}}
\put(100,30){\circle*{1}}
\put(90,5){$c$}
\end{picture} \caption{Elements $a = \lambda_{2,3} \lambda_{1,3}, \, b = \lambda_{2,3}, \, c = \lambda_{2,3} \lambda^{-1}_{1,2}   \in FVB_3$.} \label{fig8}
\end{center} 
\end{figure}

\begin{theorem}\label{FVHP_3}
	We have the following decomposition
	$$
	X_3=\mathbb{Z}^2*\mathbb{F}_3*\Gamma,
	$$
	where $\mathbb{F}_3$ is a free group of rank $3$ and $\Gamma =\langle x, y, u, v, p, q~|~xy=uv, vu=pq, qp=yx\rangle $.
\end{theorem}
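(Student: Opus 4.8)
The plan is to realise $X_3$ as the kernel of an explicit epimorphism onto $S_3$ and then to compute that kernel by the Reidemeister--Schreier procedure, starting from the presentation $FVP_3=\langle a,b,c\mid [a,c]=1\rangle$ of~\eqref{three_gen_FVP}. Since $FVK_3=\operatorname{Ker}(\nu_3)$ and $X_3=FVP_3\cap FVK_3$, we have $X_3=\operatorname{Ker}(\bar\nu)$, where $\bar\nu:=\nu_3|_{FVP_3}\colon FVP_3\to S'_3\cong S_3$. From $\nu_n(\sigma_i)=1$, $\nu_n(\rho_i)=\rho_i$ and the definition of $\lambda_{i,j}$ one checks that $\nu_3(\lambda_{i,j})$ is the transposition $(i\ j)$; hence $\bar\nu(b)=\nu_3(\lambda_{2,3})$ is a transposition, while $\bar\nu(a)=\nu_3(\lambda_{2,3}\lambda_{1,3})$ and $\bar\nu(c)=\nu_3(\lambda_{2,3}\lambda_{1,2}^{-1})$ are $3$-cycles. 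In particular $\bar\nu$ is onto, so $[FVP_3:X_3]=6$. For later bookkeeping note that $\bar\nu|_{\langle a,c\rangle}\colon\mathbb Z^2\to A_3$ has kernel $\langle ac,\,a^3\rangle\cong\mathbb Z^2$, while $\bar\nu|_{\langle b\rangle}\colon\mathbb Z\to\mathbb Z/2$ has kernel $\langle b^2\rangle$; thus $ac$, $a^3$, $b^2$ already lie in $X_3$.

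Next I would run Reidemeister--Schreier for $X_3\le FVP_3$. Fix a Schreier transversal $T$ of six elements, namely lifts of a spanning tree of the coset Schreier graph of $\bar\nu$ on $S_3$ with respect to $\{a,b,c\}$, chosen so that the tree consists of the three $b$-edges together with one $a$-edge in each of the two $\langle\bar\nu(a),\bar\nu(c)\rangle$-orbits on the cosets. This presents $X_3$ on the $18$ generators $\gamma_{t,x}=t\,x\,\overline{tx}^{-1}$ ($t\in T$, $x\in\{a,b,c\}$) with six relators, one rewriting of $t\,[a,c]\,t^{-1}$ for each $t\in T$. Discarding the five generators that are trivial by the choice of $T$ and performing a few Tietze eliminations (each consuming one relator) brings this to an $11$-generator, $4$-relator presentation.

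The final and main task is to recognise this presentation as $\mathbb Z^2*\mathbb F_3*\Gamma$. The surviving $b$-generators are conjugates of $b^2$ by transversal elements and occur in no relator (because $b$ does not appear in $[a,c]$); there are exactly three double cosets $X_3\backslash FVP_3/\langle b\rangle$, so these span a free factor $\mathbb F_3$ generated by three conjugates of $b^2$. One of the two lifted copies of $[a,c]=1$ attached to an $\langle\bar\nu(a),\bar\nu(c)\rangle$-orbit survives as a commutator relation between (conjugates of) $ac$ and $a^3$, contributing the $\mathbb Z^2$-factor. The remaining six $a,c$-generators carry the three lifts of $[a,c]=1$ over the other orbit, and these must be organised --- choosing $T$ to respect the cyclic symmetry of that orbit under right multiplication by $\bar\nu(a)$ --- into the presentation $\langle x,y,u,v,p,q\mid xy=uv,\ vu=pq,\ qp=yx\rangle=\Gamma$; reorganising the three rewritten commutator relators into this cyclically symmetric form is the step where the real work lies. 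Since the three blocks of generators are disjoint and no relator mixes blocks, the presentation is literally that of a free product, so $X_3=\mathbb Z^2*\mathbb F_3*\Gamma$. Two consistency checks are available: writing $\chi$ for the Euler characteristic, $\chi(X_3)=6\,\chi(FVP_3)=6\cdot(-1)=-6$ while $\chi(\mathbb Z^2*\mathbb F_3*\Gamma)=0+(-2)+\chi(\Gamma)-2$, forcing $\chi(\Gamma)=-2$, in agreement with its six-generator, three-relator presentation; and the Kurosh subgroup theorem applied to $FVP_3=\langle a,c\mid [a,c]=1\rangle*\langle b\rangle=\mathbb Z^2*\mathbb Z$ predicts that $X_3$ is a free product of a free group with two conjugates of $\langle ac,a^3\rangle\cong\mathbb Z^2$ and three conjugates of $\langle b^2\rangle\cong\mathbb Z$, the free rank being forced to $2$ by the Euler characteristic --- which matches the claimed decomposition once one observes, via the Nielsen substitutions $z=xy$ and $s=upx^{-1}$ after eliminating $v$ and $q$, that $\Gamma\cong\mathbb Z^2*F_2$.
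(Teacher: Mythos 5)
Your proposal follows essentially the same route as the paper: identify $X_3$ as the index-$6$ kernel of $\nu_3$ restricted to $FVP_3=\langle a,b,c\mid [a,c]=1\rangle$ and apply the Reidemeister--Schreier procedure. The paper takes the transversal $T=\{1,a,ab,c,cb,b\}$, lists the thirteen nontrivial Schreier generators, and rewrites the six conjugates $t[a,c]t^{-1}$ to get $m=r$, $m=tg$, $r=gt$ (collapsing to the $\mathbb{Z}^2$ on $t=a^2c^{-1}$, $g=c^2a^{-1}$) together with $xy=uv$, $vu=pq$, $qp=yx$ (the factor $\Gamma$), while the three $b$-generators $b^2$, $ab^2a^{-1}$, $cb^2c^{-1}$ appear in no relator and give $\mathbb{F}_3$ --- exactly the block structure you predict; the step you defer as ``where the real work lies'' is precisely the computation the paper carries out. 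What you add is worth noting: your Kurosh/Euler-characteristic argument is not just a consistency check but an independent proof of the isomorphism type, since $FVP_3\cong\mathbb{Z}^2*\mathbb{Z}$, the two double cosets of $\langle a,c\rangle$ contribute two copies of the index-$3$ sublattice ($\cong\mathbb{Z}^2$), the three double cosets of $\langle b\rangle$ contribute $\mathbb{F}_3$, and $\chi(X_3)=-6$ forces free rank $2$, giving $X_3\cong\mathbb{Z}^2*\mathbb{Z}^2*\mathbb{F}_3*\mathbb{F}_2$. Your closing computation that $\Gamma\cong\mathbb{Z}^2*\mathbb{F}_2$ (eliminating $v,q$ and passing to $z=xy$, $s=upx^{-1}$, which turns the remaining relator into $[z,s]=1$) is correct and reconciles the two answers; it actually sharpens the theorem, since the paper leaves $\Gamma$ as an unidentified $6$-generator, $3$-relator group.
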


\begin{proof}
	Consider the restriction of the homomorphism $\nu_3\colon FVB_3\to S_3'$ to $FVP_3$. Let us denote it by $\varphi\colon FVP_3\to S_n'$. Then $X_3= \operatorname{Ker}(\varphi)$.
	
	To find the generators and relations of the $X_3$ group, we use the Reidemeisetr-Schreier rewriting process, see for example~\cite{MKS}.
	Let us write out the system of Schreier representatives for $\operatorname{Ker}(\varphi)$ using the generators indicated in the presentation (\ref{three_gen_FVP}): $T = \{1, a, ab, c, cb, b\}$. For an element $g$, we denote its representative in $T$ by $\overline{g}$. Then the kernel $\operatorname{Ker}(\varphi)$ is generated by the following elements:
	\begin{align*}
		a\cdot a\cdot (\overline{a^2})^{-1}=a^2c^{-1}=t, & \, & a\cdot c\cdot (\overline{ac})^ {-1}=ac=m, \\
		ab\cdot a\cdot (\overline{aba})^{-1}=abab^{-1}=v, & \, &ab\cdot b\cdot (\overline{ab^2})^{-1 }=ab^2a^{-1}=w, \\
		ab\cdot c\cdot (\overline{abc})^{-1}=abcb^{-1}c^{-1}=p, & \, &c\cdot a\cdot (\overline{ca}) ^{-1}=ca=r, \\
		c\cdot c\cdot (\overline{c^2})^{-1}=c^2a^{-1}=g, &\, &cb\cdot a\cdot (\overline{cba})^{ -1}=cbab^{-1}a^{-1}=q, \\
		cb\cdot b\cdot (\overline{cb^2})^{-1}=cb^2c^{-1}=h, & \, &cb\cdot c\cdot (\overline{cbc})^{ -1}=cbcb^{-1}=y, \\
		b\cdot a\cdot (\overline{ba})^{-1}=bab^{-1}c^{-1}=x, & \, &b\cdot b\cdot (\overline{b^2 })^{-1}=b^2=f, \\
		b\cdot c\cdot (\overline{bc})^{-1}=bcb^{-1}a^{-1}=u.& &
	\end{align*}
	Further, the relations $taca^{-1}c^{-1}t^{-1}$ for $t\in T$ must be rewritten in new generators. For example, for $t=ab$ we get:
	$$
	\begin{gathered}
		ab(aca^{-1}c^{-1})b^{-1}a^{-1} \, = \, vbca^{-1}c^{-1}b^{-1} a^{-1}\, = \, vuaba^{-1}c^{-1}b^{-1}a^{-1} \\
		=\, vuq^{-1}cbc^{-1}b^{-1}a^{-1} \, = \, vuq^{-1}p^{-1}.
	\end{gathered}
	$$
	The rest of the relations are found similarly. As a result, we get:
	\begin{align*}
		&m=r, \quad &&m=tg, \quad &&r=gt, \\
		&xy=uv, \quad &&vu=pq, \quad &&qp=yx.
	\end{align*}
	It is now clear that the elements $g, t$ generate $\mathbb{Z}^2$, the elements $w, h, f$ generate $\mathbb{F}_3$, and the group generated by the elements $x$ $y $, $u$, $v$, $p$ and $q$ we denote by $\Gamma$.
\end{proof}

\begin{lemma}~\cite{B23} \label{lemma2.6}
	Let $G_n$ be a subgroup of $FVP_n$ generated by the elements:
	\begin{align}
		t_{i,j}=&\lambda_{i,j}^{2}, \quad &&1\le i<j\le n, \label{def-t} \\
		d_{i,j,k}=&\lambda_{j,k}^{-1}\lambda_{i,j}^{-1}\lambda_{j,k} \lambda_{i,k}, \quad &&1\le i<j<k\le n,\\
		e_{i,j,k}=&\lambda_{j,k}^{-1}\lambda_{i,j}^{-1}\lambda_{i,k} \lambda_{i,j}, \quad &&1\le i<j<k\le n.
	\end{align}
	Then the normal closure of $G_n$ in $FVP_n$ coincides with $X_n$.
\end{lemma}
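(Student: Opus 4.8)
The plan is to prove the two inclusions $\langle\langle G_n\rangle\rangle^{FVP_n}\subseteq X_n$ and $X_n\subseteq\langle\langle G_n\rangle\rangle^{FVP_n}$ separately: the first is a direct computation with the homomorphism $\nu_n$, and the second follows from an index count based on the presentation of $FVP_n$ recorded above. First I would note that $X_n$ is normal in $FVP_n$ (indeed in $FVB_n$), being $\operatorname{Ker}\pi_n\cap\operatorname{Ker}\nu_n$, and that $\nu_n$ restricts to a surjection $FVP_n\to S'_n$ whose kernel is exactly $X_n=FVP_n\cap FVK_n$: using $\nu_n(\sigma_i)=1$, $\nu_n(\rho_i)=\rho_i$ and $\rho_k^2=1$ one gets $\nu_n(\lambda_{i,j})=\rho_{j-1}\cdots\rho_{i+1}\rho_i\rho_{i+1}\cdots\rho_{j-1}$, which is the transposition $(i,j)$ in $S'_n$. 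Hence $FVP_n/X_n\cong S_n$, a group of order $n!$, under $\lambda_{i,j}\mapsto(i,j)$.

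For the inclusion $\langle\langle G_n\rangle\rangle^{FVP_n}\subseteq X_n$, by normality of $X_n$ it is enough to check that $G_n\subseteq X_n$, i.e.\ that $\nu_n$ annihilates the generators. From $\nu_n(\lambda_{i,j})=(i,j)$ one computes $\nu_n(t_{i,j})=(i,j)^2=1$ and $\nu_n(d_{i,j,k})=(j,k)(i,j)(j,k)(i,k)=(i,k)(i,k)=1$, and likewise $\nu_n(e_{i,j,k})=1$. Consequently the quotient $Q:=FVP_n/\langle\langle G_n\rangle\rangle^{FVP_n}$ admits a surjection onto $FVP_n/X_n\cong S_n$.

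For the opposite inclusion I would show that $Q$ is also a quotient of $S_n$, hence $|Q|\le n!$, and then compare. Write $\bar\lambda_{i,j}$ for the image of $\lambda_{i,j}$ in $Q$. The relations $t_{i,j}=1$ make each $\bar\lambda_{i,j}$ an involution; relation (\ref{eq101}) says $\bar\lambda_{i,j}$ and $\bar\lambda_{k,l}$ commute when the pairs of indices are disjoint; and $d_{i,j,k}=1$, $e_{i,j,k}=1$ become, after using $\bar\lambda^2=1$, the identities $\bar\lambda_{i,k}=\bar\lambda_{j,k}\bar\lambda_{i,j}\bar\lambda_{j,k}=\bar\lambda_{i,j}\bar\lambda_{j,k}\bar\lambda_{i,j}$ for $i<j<k$. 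The first equality, applied inductively with $j=k-1$, expresses every $\bar\lambda_{i,j}$ as a word in the adjacent elements $\bar\lambda_{1,2},\dots,\bar\lambda_{n-1,n}$, while comparing the two expressions for $(i,j,k)=(l,l+1,l+2)$ yields $(\bar\lambda_{l,l+1}\bar\lambda_{l+1,l+2})^3=1$. Thus $Q$ is generated by $n-1$ involutions satisfying the Coxeter relations of $S_n$, so $|Q|\le n!$. Combined with the surjection $Q\twoheadrightarrow S_n$ from the previous paragraph, this forces $|Q|=n!=[FVP_n:X_n]$, and since $\langle\langle G_n\rangle\rangle^{FVP_n}\subseteq X_n$ we conclude $\langle\langle G_n\rangle\rangle^{FVP_n}=X_n$.

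The main obstacle is not a single hard step but the bookkeeping in the third paragraph: one must verify carefully that relations (\ref{eq101})--(\ref{eq102}) together with $t_{i,j}=d_{i,j,k}=e_{i,j,k}=1$ really collapse $FVP_n$ no further than to a quotient of $S_n$ — that every $\bar\lambda_{i,j}$ is genuinely reducible to the adjacent generators and that the full Coxeter presentation of $S_n$ is satisfied by them — and, on the other side, that the identification $FVP_n/X_n\cong S_n$ used in the index count is correct. An alternative to the index count would be to run the Reidemeister–Schreier process on $\nu_n|_{FVP_n}\colon FVP_n\to S'_n$, as in the proof of Theorem~\ref{FVHP_3}, but for general $n$ the index-count argument is considerably shorter.
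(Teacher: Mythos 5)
Your argument is correct and complete. Note that the paper itself gives no proof of this lemma -- it is quoted from~\cite{B23} -- so there is nothing internal to compare against; judged on its own, your two-inclusion strategy works. The easy inclusion is right: $\nu_n(\lambda_{i,j})$ is the transposition $(i,j)$, and a direct check kills $t_{i,j}$, $d_{i,j,k}$, $e_{i,j,k}$, so $G_n\le X_n=\operatorname{Ker}(\nu_n|_{FVP_n})$ and hence so is its normal closure $N$. For the reverse inclusion, your index count is sound: in $Q=FVP_n/N$ the relations $t_{i,j}=1$ make each $\bar\lambda_{i,j}$ an involution, $d_{i,j,k}=1$ and $e_{i,j,k}=1$ give $\bar\lambda_{i,k}=\bar\lambda_{j,k}\bar\lambda_{i,j}\bar\lambda_{j,k}=\bar\lambda_{i,j}\bar\lambda_{j,k}\bar\lambda_{i,j}$, the case $j=k-1$ reduces everything inductively to the adjacent generators, the two expressions yield $(\bar\lambda_{l,l+1}\bar\lambda_{l+1,l+2})^3=1$, and relation~(\ref{eq101}) supplies the commutations for distant indices; so $Q$ is a quotient of the Coxeter group $S_n$, giving $|Q|\le n!$, while the surjection $Q\twoheadrightarrow FVP_n/X_n\cong S_n$ (which uses $N\le X_n$ and the surjectivity of $\nu_n|_{FVP_n}$, clear since $\lambda_{i,i+1}\mapsto\rho_i$) gives $|Q|\ge n!$. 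The only presentational caveat is that you rely on the BBD presentation of $FVP_n$ exactly as recorded in the paper (in particular that the $\lambda_{i,j}$, $i<j$, generate $FVP_n$, with the convention $\lambda_{j,i}=\lambda_{i,j}^{-1}$ implicit in relation~(\ref{eq102})); granting that, the proof stands, and it is likely shorter than a direct Reidemeister--Schreier computation of the kind the paper carries out for $X_3$ in Theorem~\ref{FVHP_3}.
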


Let us describe the action of $\Theta_n^w$ on the generators indicated in Lemma~\ref{lemma2.6}.

\begin{lemma}\label{lemma3.3}
	The homomorphism $\Theta_n^w : FVB_n \to \operatorname{Aut} (\mathbb F_{2n})$, defined by the word $w \in \mathbb F_2$, maps the generators of the group $G_n$ to the following automorphisms:
	\begin{align}
		\Theta_n^w (t_{i,j})&:\begin{cases}
			x_i \mapsto x_i \, w_{i,j}^{-1} \, w_{j,i}^{-1},\\
			x_j\mapsto x_j \, w_{i,j} \, w_{j,i},
		\end{cases} \quad &&1 \le i<j\le n,\label{gamma}\\
		\Theta_n^w(d_{i,j,k})&:\begin{cases}
			x_j\mapsto x_j\, w_{j,i}^{-1}\, w_{i,k}^{-1} \, w_{j,k},\\
			x_k\mapsto x_k\, w_{i,k} \, w_{j,i} \, w_{j,k}^{-1},
		\end{cases} \quad &&1\le i<j<k\le n,\label{delta}\\
		\Theta_n^w (e_{i,j,k})&:\begin{cases}
			x_i\mapsto x_i\, w_{i,j}^{-1} \, w_{j,k}^{-1} \, w_{i,k},\\
			x_j\mapsto x_j\, w_{i,j} \, w_{i,k}^{-1} \, w_{j,k} ,
		\end{cases} \quad &&1\le i<j<k\le n.\label{eps}
	\end{align}
	where $w_{i,j} = w (y_i, y_j)$ for all $i,j$.
\end{lemma}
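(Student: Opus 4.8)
The plan is to compute first the images $\Theta_n^w(\lambda_{i,j})$ of the generators of $FVP_n$, and then to read off (\ref{gamma})--(\ref{eps}) by composing these automorphisms according to the definitions of $t_{i,j}$, $d_{i,j,k}$ and $e_{i,j,k}$ in Lemma~\ref{lemma2.6}. Throughout, compositions are read from left to right, in accordance with the convention $\varphi\psi(f)=\psi(\varphi(f))$, and $w_{i,j}=w(y_i,y_j)$.

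\emph{Step 1: the action of $\Theta_n^w$ on $\lambda_{i,j}$.} I would prove, by induction on $j-i$, that for $1\le i<j\le n$ the automorphism $\Theta_n^w(\lambda_{i,j})$ fixes every free generator of $\mathbb{F}_{2n}$ except
$$
x_i\mapsto x_i\,w_{i,j}^{-1},\qquad x_j\mapsto x_j\,w_{i,j},\qquad y_i\mapsto y_j,\qquad y_j\mapsto y_i,
$$
and hence that $\Theta_n^w(\lambda_{i,j}^{-1})$ fixes all generators except $x_i\mapsto x_i\,w_{j,i}$, $x_j\mapsto x_j\,w_{j,i}^{-1}$, $y_i\leftrightarrow y_j$. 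The base case $j=i+1$ is a direct computation from $\lambda_{i,i+1}=\rho_i\sigma_i$ using (\ref{homodef1})--(\ref{homodef2}) with all $m_k=0$. For the inductive step I would use the identity $\lambda_{i,j}=\rho_{j-1}\,\lambda_{i,j-1}\,\rho_{j-1}$ (which follows from the definition of $\lambda_{i,j}$ together with $\rho_{j-1}^2=1$), together with the remark that, when $m=0$, $\Theta_n^w(\rho_k)$ acts on $\mathbb{F}_{2n}$ exactly by transposing the index $k$ with $k+1$ in both the $x$-generators and the $y$-generators. Conjugating the inductively known automorphism $\Theta_n^w(\lambda_{i,j-1})$ by this transposition therefore replaces every occurrence of the index $j-1$ by $j$ (the index $i<j-1$ being untouched), which yields the asserted form for $\Theta_n^w(\lambda_{i,j})$.

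\emph{Step 2: the three composites.} With Step~1 available, each of (\ref{gamma})--(\ref{eps}) is obtained by evaluating a short product of the automorphisms $\Theta_n^w(\lambda_{p,q}^{\pm 1})$. For $t_{i,j}=\lambda_{i,j}^2$ one applies $\Theta_n^w(\lambda_{i,j})$ twice: after the first application $x_i\mapsto x_i\,w_{i,j}^{-1}$, and the second application sends this to $x_i\,w_{i,j}^{-1}w_{j,i}^{-1}$ because the second copy transposes $y_i$ and $y_j$ inside the subword $w_{i,j}^{-1}$; likewise $x_j\mapsto x_j\,w_{i,j}w_{j,i}$, while the two $y$-transpositions cancel, so $\Theta_n^w(t_{i,j})$ fixes all $y_k$ --- as it must, since $G_n\le X_n\le FVK_n$. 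For $d_{i,j,k}=\lambda_{j,k}^{-1}\lambda_{i,j}^{-1}\lambda_{j,k}\lambda_{i,k}$ and $e_{i,j,k}=\lambda_{j,k}^{-1}\lambda_{i,j}^{-1}\lambda_{i,k}\lambda_{i,j}$ one carries out the analogous fourfold composition, at each step applying the current automorphism's transposition of $y$-indices to \emph{all} the $w$-subwords accumulated so far. In both cases the net $y$-permutation is trivial, exactly one of the three relevant $x$-generators returns to itself (namely $x_i$ for $d_{i,j,k}$ and $x_k$ for $e_{i,j,k}$), and the remaining two acquire precisely the words displayed in (\ref{delta}) and (\ref{eps}).

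The only genuine difficulty is the bookkeeping: one must stay consistent about the left-to-right composition convention and must not forget, at every stage of a composition, to substitute into \emph{every} $w$-subword already present rather than only into the leading $x$-generator. Two built-in consistency checks are that the net permutation of the $y_k$'s comes out trivial and that the final map is an automorphism (automatic, being a composition of automorphisms); the latter also accounts for the fact that $d_{i,j,k}$ and $e_{i,j,k}$ each displace only two of the $x$-generators.
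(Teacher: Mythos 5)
Your proposal is correct and follows essentially the same route as the paper: both first establish the key intermediate formula for $\Theta_n^w(\lambda_{i,j})$ (the paper by directly conjugating $\Theta_n^w(\lambda_{i,i+1})$ with the automorphism $\Theta_n^w(\rho_{j-1}\cdots\rho_{i+1})$, you by an equivalent induction on $j-i$ via $\lambda_{i,j}=\rho_{j-1}\lambda_{i,j-1}\rho_{j-1}$), and then obtain (\ref{gamma})--(\ref{eps}) by composing these automorphisms with the stated left-to-right convention. The bookkeeping you describe does produce exactly the displayed words, so nothing is missing.
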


\begin{proof}
	Let, as before, $s_{i}=\Theta_n^w (\sigma_{i})$ and $r_{i}=\Theta_n^w(\rho_{i})$.
	First of all, let us establish some auxiliary formulas. Let $1\le i<j-1\le n-1$, then 
	\begin{align*}
		b_{i,j} :=\Theta^w_n(\rho_{j-1}\ldots\rho_{i+1}):\begin{cases}
			x_{i+1}\overset{r_{i+1}}{\longmapsto}x_{i+2},\\
			\vdots\\
			x_{j-1}\overset{r_{j-1}}{\longmapsto}x_j,\\
			x_j\overset{r_{j-1}}{\longmapsto}x_{j-1}\overset{r_{j-2}}{\longmapsto}x_{j-2}\ldots \overset{r_{i+1}}{\longmapsto}x_{i+1},\\
			y_{i+1}\overset{r_{i+1}}{\longmapsto}y_{i+2},\\
			\vdots\\
			y_{j-1}\overset{r_{j-1}}{\longmapsto}y_j,\\
			y_j\overset{r_{j-1}}{\longmapsto}x_{j-1}\overset{r_{j-2}}{\longmapsto}x_{j-2}\ldots \overset{r_{i+1}}{\longmapsto}y_{i+1}.
		\end{cases}
	\end{align*}
	Further,
	\begin{align*}
		a_{i,i+1} :=\Theta^w_n(\lambda_{i,i+1})=\Theta^w_n(\rho_i\sigma_i):\begin{cases}
			x_i\overset{r_{i}^{}}{\longmapsto}x_{i+1}\overset{s_{i}^{}}{\longmapsto}x_i \, w_{i,i+1}^{-1},\\
			x_{i+1}\overset{r_{i}^{}}{\longmapsto}x_i\overset{s_{i}^{}}{\longmapsto}x_{i+1} \, w_{i,i+1},\\
			y_i\overset{r_{i}^{}}{\longmapsto}y_{i+1}\overset{s_{i}^{}}{\longmapsto}y_{i+1},\\
			y_{i+1}\overset{r_{i}^{}}{\longmapsto}y_i\overset{s_{i}^{}}{\longmapsto}y_{i}.
		\end{cases}
	\end{align*}	
	Let us show that for $1\le i<j\le n$ the formulas
	\begin{align*}
		\Theta^w_n(\lambda_{i,j}):\begin{cases}
			x_i\mapsto x_i \, w_{i,j}^{-1},\\
			x_j\mapsto x_j \, w_{i,j},\\
			y_i\mapsto y_j,\\
			y_j\mapsto y_i,
		\end{cases}
		\qquad
		\Theta^w_n(\lambda_{i,j}^{-1}):\begin{cases}
			x_i\mapsto x_i \, w_{j,i},\\
			x_j\mapsto x_j \, w_{j,i}^{-1},\\
			y_i\mapsto y_j,\\
			y_j\mapsto y_i
		\end{cases}
	\end{align*} hold.
	Indeed, we have 
	\begin{align*}
		\Theta^w_n(\lambda_{i,j}):\begin{cases}
			x_i\overset{b_{i,j}}{\longmapsto}x_i\overset{a_{i,i+1}}{\longmapsto}x_i \, w_{i,i+1}^{-1}\overset{b_{i,j}^{-1}}{\longmapsto}x_i \,w_{i,j}^{-1},\\
			x_{i+1}\overset{b_{i,j}}{\longmapsto}x_{i+2}\overset{a_{i,i+1}}{\longmapsto}x_{i+2}\overset{b_{i,j}^{-1}}{\longmapsto}x_{i+1},\\
			\vdots\\
			x_{j-1}\overset{b_{i,j}}{\longmapsto}x_j\overset{a_{i,i+1}}{\longmapsto}x_j\overset{b_{i,j}^{-1}}{\longmapsto}x_{j-1},\\
			x_j\overset{b_{i,j}}{\longmapsto}x_{i+1}\overset{a_{i,i+1}}{\longmapsto}x_{i+1} \, w_{i,i+1}\overset{b_{i,j}^{-1}}{\longmapsto}x_j \, w_{i,j},\\
			y_i\overset{b_{i,j}}{\longmapsto}y_i\overset{a_{i,i+1}}{\longmapsto}y_{i+1}\overset{b_{i,j}^{-1}}{\longmapsto}y_j,\\
			y_{i+1}\overset{b_{i,j}}{\longmapsto}y_{i+2}\overset{a_{i,i+1}}{\longmapsto}y_{i+2}\overset{b_{i,j}^{-1}}{\longmapsto}y_{i+1},\\
			\vdots\\
			y_{j-1}\overset{b_{i,j}}{\longmapsto}y_j\overset{a_{i,i+1}}{\longmapsto}y_j\overset{b_{i,j}^{-1}}{\longmapsto}y_{j-1},\\
			y_j\overset{b_{i,j}}{\longmapsto}y_{i+1}\overset{a_{i,i+1}}{\longmapsto}y_i\overset{b_{i,j}^{-1}}{\longmapsto}y_i.
		\end{cases}
	\end{align*}
	We are now ready to prove the formulas (\ref{gamma})~-- (\ref{eps}). For example, let us establish (\ref{gamma}):
	\begin{align*}
		\Theta^w_n (t_{i,j}) = \Theta^w_n(\lambda_{i,j}^2):\begin{cases}
			x_i\overset{\lambda_{i,j}}{\longmapsto}x_i \, w_{i,j}^{-1}\overset{\lambda_{i,j}}{\longmapsto}x_i \, w_{i,j}^{-1} \, w_{j,i}^{-1},\\
			x_j\overset{\lambda_{i,j}}{\longmapsto} x_i \, w_{i,j}\overset{\lambda_{i,j}}{\longmapsto} x_i \, w_{i,j} \, w_{j,i},\\
			y_i\overset{\lambda_{i,j}}{\longmapsto} y_j\overset{\lambda_{i,j}}{\longmapsto} y_i,\\
			y_j\overset{\lambda_{i,j}}{\longmapsto} y_i\overset{\lambda_{i,j}}{\longmapsto} y_j.
		\end{cases}
	\end{align*} 
	The formulas (\ref{delta}) and (\ref{eps}) are proved in the same way.
\end{proof}

The following statement answers the question about the faithfulness of the representations $\Theta_n^{w,m}$ in the case $n=2$.
\begin{theorem} \label{theorem2.5}
	The nontrivial representation $\Theta_2^{w,m} : FVB_2\to \operatorname{Aut} ( \mathbb F_4)$ is not exact if the defining word $w$ is $$w(A,B)=A^{k_1} B^{k_2}\ldots A^{k_m}B^{-k_m}\ldots A^{-k_2}B^{-k_1}A^{m_1},$$ where all $k_i$ are nonzero integers except possibly only for $k_1$ and $k_m$. In this case, $\operatorname{Ker}(\Theta_2^{w,m})=X_2\simeq\mathbb{Z}$. The representation of $\Theta_2^{w,m}$ is exact for other $w$.
\end{theorem}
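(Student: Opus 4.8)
The case $n=2$ is special because $FVB_2 = \langle \sigma_1, \rho_1 \mid \sigma_1^2 = \rho_1^2 = 1 \rangle \cong \mathbb{Z}_2 * \mathbb{Z}_2$, the infinite dihedral group, and there are no braid relations or mixed relations to worry about. By Lemma~\ref{lemma3.1} we have $\operatorname{Ker}(\Theta_2^{w,m}) \le FVP_2 \cap FVK_2 = X_2$, so the first step is to identify $X_2$ concretely: it is the kernel of $FVP_2 \to S_2$ restricted appropriately, and since $FVP_2$ is infinite cyclic (generated by $\lambda_{1,2} = \rho_1\sigma_1$, with $\lambda_{1,2}^2$ generating $X_2$), we have $X_2 \cong \mathbb{Z}$ generated by $t_{1,2} = \lambda_{1,2}^2$. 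So the whole question reduces to: \emph{for which defining data $(w, m_1)$ is $\Theta_2^{w,m}(t_{1,2})$ the identity automorphism of $\mathbb{F}_4$?} If it is trivial, the kernel is all of $X_2 \cong \mathbb{Z}$; if it is nontrivial, then since $\langle t_{1,2}\rangle$ has no nontrivial subgroups of infinite index... more precisely, if the generator maps to an infinite-order element the representation is faithful on $X_2$, hence (combined with the containment) faithful on $FVB_2$.

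\textbf{Computing the relevant automorphism.} From Lemma~\ref{lemma3.3}, specialized to $n=2$ and $(i,j)=(1,2)$, we have $\Theta_2^w(t_{1,2})$ sending $x_1 \mapsto x_1 w_{1,2}^{-1} w_{2,1}^{-1}$ and $x_2 \mapsto x_2 w_{1,2} w_{2,1}$, where $w_{1,2} = w(y_1,y_2)$ and $w_{2,1} = w(y_2,y_1)$. For the general $\Theta_2^{w,m}$ one must redo this small computation incorporating the $m_1$-twist; the upshot is that the automorphism is trivial precisely when $w(y_1,y_2)\, w(y_2,y_1) = 1$ in $\mathbb{F}_2$ (after absorbing the $y^{m_1}$ factors, which for $n=2$ contribute a word of the stated shape), i.e. when $w(B,A) = w(A,B)^{-1}$. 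The second step is therefore an elementary exercise in the free group $\mathbb{F}_2 = \langle A, B\rangle$: characterize all words $w$ with $w(B,A) = w(A,B)^{-1}$. Writing $w$ in reduced form and comparing it letter-by-letter with the reverse-and-invert of its image under swapping $A \leftrightarrow B$, one sees the word must be a "palindromic-type" expression that telescopes; carrying out this matching yields exactly the normal form $w(A,B) = A^{k_1}B^{k_2}\cdots A^{k_m}B^{-k_m}\cdots B^{-k_2}A^{-k_1}A^{m_1}$ claimed in the statement (the trailing $A^{m_1}$ coming from the $\rho$-twist when $m \ne 0$, and the endpoint exponents $k_1, k_m$ allowed to vanish since they sit at the seam).

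\textbf{Concluding.} Once the characterization of "bad" words is in hand, the theorem follows: (i) if $w$ has the stated form, $\Theta_2^{w,m}(t_{1,2}) = \operatorname{id}$, so $\langle t_{1,2}\rangle = X_2 \le \operatorname{Ker}(\Theta_2^{w,m})$; combined with Lemma~\ref{lemma3.1} this gives $\operatorname{Ker}(\Theta_2^{w,m}) = X_2 \cong \mathbb{Z}$, and the representation is not faithful (and not trivial, since $\sigma_1, \rho_1$ still act nontrivially). (ii) If $w$ does not have this form, then $\Theta_2^{w,m}(t_{1,2})$ is a nontrivial automorphism; because $X_2 = \langle t_{1,2}\rangle$ is infinite cyclic and $\operatorname{Ker}(\Theta_2^{w,m}) \le X_2$ is a subgroup, the kernel is either trivial or of the form $\langle t_{1,2}^k\rangle$ for some $k \ge 1$ — but the word $w_{1,2}w_{2,1}$ is a \emph{fixed} nonempty reduced word, and the $k$-th power of the automorphism appends the $k$-fold concatenation of that word (which never cancels to the empty word since it lies in the subgroup $\langle y_1, y_2\rangle$ and concatenation there is length-additive after the first cancellation is accounted for), so no power is trivial; hence $\operatorname{Ker}(\Theta_2^{w,m}) = 1$ and $\Theta_2^{w,m}$ is faithful.

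\textbf{Anticipated main obstacle.} The genuine work is the free-group lemma: proving that $w(B,A) = w(A,B)^{-1}$ forces the palindromic normal form, and correctly tracking how the $m_1$-twist in $\Theta_2^{w,m}$ (as opposed to $\Theta_2^w$) modifies the condition and produces the trailing $A^{m_1}$. This requires a careful reduced-word bookkeeping argument — essentially an induction on word length peeling off matched end-letters — together with verifying the degenerate cases where the outermost exponents collapse. The containment statements and the cyclic-group argument are routine given the earlier lemmas.
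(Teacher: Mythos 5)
Your proposal follows essentially the same route as the paper: contain the kernel in $X_2=\langle t_{1,2}\rangle\cong\mathbb{Z}$ via Lemma~\ref{lemma3.1}, compute that $\Theta_2^{w,m}(t_{1,2}^k)$ appends the $k$-th power of the fixed word $w_{1,2}^{-1}y_2^{m_1}w_{2,1}^{-1}y_1^{m_1}$ to $x_1$, reduce triviality to the symmetry condition $f(y_1,y_2)=f^{-1}(y_2,y_1)$ for $f(y_1,y_2)=w(y_1,y_2)y_1^{-m_1}$, and extract the palindromic normal form by letter-matching (the step you defer is handled in the paper by the same alternating-syllable comparison, at a comparable level of detail). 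The torsion-freeness argument you give for the faithful case is exactly what the paper uses implicitly, so there is no substantive difference in approach.
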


\begin{proof}
	Lemma \ref{lemma3.1} implies that ${\rm{Ker}}(\Theta_2^{w,m})\le X_2$. It is easy to show that $X_2$ is generated by the element $t_{1,2}$.
	
	In the case of $n=2$, the set $m$ consists of a single integer $m = \{m_1\}$. For any $k\in \mathbb{Z}$
	\begin{align*}
		\Theta_n^{w,m}(t_{1,2}^k)\colon \begin{cases}
			x_1\mapsto x_1 \, (w_{1,2}^{-1} \, y_2^{m_1} \, w_{2,1}^{-1} \, y_1^{m_1})^k,\\
			x_2\mapsto x_2 \, (w_{1,2} \, y_1^{-m_1} \, w_{2,1} \, y_2^{-m_1})^k.
		\end{cases}
	\end{align*}
	Thus $\Theta_n^w(t_{1,2}^k)={\rm id}$ iff either $k=0$ or $$w_{1,2}^{- 1} \, y_2^{m_1} \, w_{2,1}^{-1} \, y_1^{m_1}=1,$$ i.e. $f(y_1,y_2)=f^{-1} (y_2,y_1)$ for the word $f(y_1,y_2)=w(y_1,y_2)y_1^{-m_1}$.
	
	Let $f(A,B)=A^{k_1}B^{k_2} \dots B^{k_s}A^{k_{s+1}}$. Then
	$$
	A^{k_1}B^{k_2}\dots B^{k_s}A^{k_{s+1}}=B^{-k_{s+1}}A^{-k_s}\dots A^{ -k_2}B^{-k_1},
	$$
	therefore $f(A,B)=A^{k_1}B^{k_2} \dots A^{k_m}B^{-k_m}\ldots A^{-k_2}B^{-k_1}$, where all $k_i$ --- nonzero integers except maybe $k_1$ and $k_m$. But then $$w(A,B)=A^{k_1}B^{k_2}\ldots A^{k_m}B^{-k_m}\ldots A^{-k_2}B^{-k_1}A^ {m_1}.$$
	This completes the proof.
\end{proof}

\section{Nontriviality of the kernel ${\rm{Ker}}(\Theta_n^{w,m})$ for $n\ge3$} \label{section3}
Consider the following subgroups of the $FVB_n$ group:
\begin{align}
	Q_n^i&=\langle t_{i,i+1},~e_{i,i+1,i+2}\rangle, \quad &1\le i\le n-2,\\
	M_n^{i+1}&=\langle t_{i+1,i+2},~d_{i,i+1,i+2}\rangle, \quad &1\le i\le n-2, \\
	P_n^{i+2}&=\langle t_{i,i+2}\rangle, \quad &1\le i\le n-2.
\end{align}

\begin{lemma} \label{lemma4.1}
	Let $n\ge3$ and $w(A,B)\in \mathbb F_2 (A,B)$. Then, for all $i$, $1\le i \le n-2$, we obtain the inclusion
	\begin{align}
		\Big[Q_n^i,\big[M_n^{i+1},P_n^{i+2}\big]\Big] \le {\rm{Ker}}(\Theta_n^w). \label{incl}
	\end{align}
\end{lemma}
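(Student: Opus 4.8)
The plan is to work inside the free group $\mathbb{F}_{2n}$ and exploit the fact that, after applying $\Theta_n^w$, every element of $FVP_n$ fixes each $y_k$ and acts on the $x_k$'s by right-multiplication by a word in the $y$'s. More precisely, by Lemma~\ref{lemma3.3} the generators $t_{i,i+1}$, $e_{i,i+1,i+2}$, $d_{i,i+1,i+2}$ and $t_{i,i+2}$ are each sent to an automorphism that fixes all $y_k$ and multiplies at most two of the variables $x_i, x_{i+1}, x_{i+2}$ on the right by an explicit word in $y_i, y_{i+1}, y_{i+2}$. I would first record these actions explicitly from \eqref{gamma}--\eqref{eps} (with $m=0$, so $w_{i,j}=w(y_i,y_j)$): $\Theta_n^w(t_{i,i+1})$ affects $x_i,x_{i+1}$; $\Theta_n^w(e_{i,i+1,i+2})$ affects $x_i,x_{i+1}$; $\Theta_n^w(d_{i,i+1,i+2})$ affects $x_{i+1},x_{i+2}$; and $\Theta_n^w(t_{i,i+2})$ affects $x_i,x_{i+2}$.

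The key structural observation is the following. Let $\mathcal{A}$ be the group of automorphisms of $\mathbb{F}_{2n}$ that fix every $y_k$ and send each $x_k\mapsto x_k\, u_k$ with $u_k$ in the subgroup $\langle y_1,\dots,y_n\rangle$ — which, crucially, is a \emph{free abelian-free?} no: it is free, but the point is that $\langle y_1,\dots,y_n\rangle$ is a \emph{normal-free} retract and the assignment $\alpha\mapsto(u_1,\dots,u_n)$ is a homomorphism from $\mathcal{A}$ to the direct product of $n$ copies of $\langle y_1,\dots,y_n\rangle$, because these automorphisms act trivially on the $y$'s and hence the correction words simply multiply. Thus $\mathcal{A}\cong \langle y_1,\dots,y_n\rangle^{\,n}$ is a direct product of free groups, and in particular the commutator $[\alpha,\beta]$ of two elements of $\mathcal{A}$ depends only on the component-wise commutators $[u_k,v_k]$. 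Now $\Theta_n^w(t_{i,i+1})$ and $\Theta_n^w(e_{i,i+1,i+2})$ modify the $x_i$- and $x_{i+1}$-coordinates by words in $y_i,y_{i+1}$ and $y_i,y_{i+1},y_{i+2}$ respectively; $\Theta_n^w(t_{i+1,i+2})$ and $\Theta_n^w(d_{i,i+1,i+2})$ modify $x_{i+1},x_{i+2}$; and $\Theta_n^w(t_{i,i+2})$ modifies $x_i,x_{i+2}$. The plan is to compute $\big[M_n^{i+1},P_n^{i+2}\big]$ first: elements of $M_n^{i+1}$ touch coordinates $x_{i+1},x_{i+2}$ and $P_n^{i+2}$ touches $x_i,x_{i+2}$, so their commutators touch only the shared coordinate $x_{i+2}$, producing automorphisms that fix $x_i$ and $x_{i+1}$ and all $y$'s. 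Then commuting such an automorphism with an element of $Q_n^i$ (which touches only $x_i,x_{i+1}$) forces agreement in a coordinate set with empty intersection — hence the outer commutator is trivial.

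Concretely, the steps are: (i) from Lemma~\ref{lemma3.3} write down $\Theta_n^w$ on each of $t_{i,i+1}, e_{i,i+1,i+2}, t_{i+1,i+2}, d_{i,i+1,i+2}, t_{i,i+2}$ and note that every such automorphism lies in the subgroup $\mathcal{A}$ described above and is supported (in the sense of nontrivial correction word) on an explicit pair of $x$-coordinates; (ii) observe $\mathcal{A}$ embeds in $\prod_{k=1}^n \langle y_1,\dots,y_n\rangle$ so that commutator brackets are computed coordinate-by-coordinate; (iii) deduce that $\Theta_n^w\big(\big[M_n^{i+1},P_n^{i+2}\big]\big)$ is supported only on the coordinate $x_{i+2}$ (the sole common index), hence lies in the ``$x_{i+2}$-only'' subgroup of $\mathcal{A}$; (iv) note $\Theta_n^w(Q_n^i)$ is supported on coordinates $x_i,x_{i+1}$; (v) since $\{i,i+1\}\cap\{i+2\}=\varnothing$, the two subgroups commute inside $\mathcal{A}$ — their correction words live in disjoint coordinates — so $\Theta_n^w\big(\big[Q_n^i,[M_n^{i+1},P_n^{i+2}]\big]\big)=1$, i.e.\ the whole group \eqref{incl} lies in $\operatorname{Ker}(\Theta_n^w)$.

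The one point that needs genuine care — and which I expect to be the main obstacle — is step (i)/(iii): one must verify from the explicit formulas of Lemma~\ref{lemma3.3} that these automorphisms really do only touch the claimed $x$-coordinates and really do fix all $y_k$ (so that the ``correction words multiply'' principle applies verbatim). In particular, for a product like $e_{i,i+1,i+2}\,t_{i,i+2}\,e_{i,i+1,i+2}^{-1}$ one must check there is no ``cross-talk'' through a shared $y$-variable that would spoil the coordinate-wise commutator computation — but since all these automorphisms fix every $y_k$ pointwise, composition of two of them simply concatenates correction words coordinate by coordinate, and there is no such cross-talk. Once that is nailed down, the disjoint-support argument is immediate: $[Q_n^i,[M_n^{i+1},P_n^{i+2}]]$ maps under $\Theta_n^w$ into the commutator of the subgroup of $\mathcal{A}$ supported on $\{x_i,x_{i+1}\}$ with the subgroup supported on $\{x_{i+2}\}$, and that commutator is trivial because $\mathcal{A}$ is a direct product over coordinates.
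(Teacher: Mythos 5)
Your proposal is correct and is essentially the paper's own argument: both rest on the observation that $\Theta_n^w$ sends $Q_n^i$, $M_n^{i+1}$, $P_n^{i+2}$ into the subgroup of automorphisms that fix every $y_k$ and right-multiply the $x_k$ by words in the $y$'s, with supports $\{x_i,x_{i+1}\}$, $\{x_{i+1},x_{i+2}\}$, $\{x_i,x_{i+2}\}$ respectively, so the inner commutator is supported only on $x_{i+2}$ and the outer commutator, having disjoint support from $Q_n^i$, is trivial. The paper simply carries this out concretely by tracing the action of $\big[q,[m,p]\big]$ on $x_i$, $x_{i+1}$, $x_{i+2}$ rather than packaging it as your coordinatewise direct-product (support) argument, which is a harmless and slightly cleaner reformulation of the same idea.
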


\begin{proof}
	Note that $Q_n^i$ acts non-trivially only on generators $x_i$ and $x_{i+1}$, $M_n^{i+1}$ acts non-trivially only on $x_{i+1}$ and $x_ {i+2}$, while $P_n^{i+2}$ acts non-trivially only on $x_i$ and $x_{i+2}$.
	Consider the element $$h=q(mpm^{-1}p^{-1})q^{-1}(mpm^{-1}p^{-1})^{-1},$$ where $q\in Q_n^i$, $m\in M_n^{i+1}$ and $p\in P_n^{i+2}$. This element acts non-trivially only on the generators $x_i$, $x_{i+1}$ and $x_{i+2}$. Write out its action:
	$$
	h : x_i\overset{q}{\longmapsto}qx_i\overset{m}{\longmapsto}qx_i\overset{p}{\longmapsto}pqx_i\overset{m^{-1}}{\longmapsto} 
	pqx_i\overset{p^{-1}}{\longmapsto} qx_i\overset{q^{-1}}{\longmapsto}x_i\overset{p}{\longmapsto} px_i\overset{m}{\longmapsto}px_i\overset{p^{-1}}{\longmapsto}x_i\overset{m^{-1}}{\longmapsto}x_i; 
	$$
	$$
	\begin{gathered}	
		h : x_{i+1}\overset{q}{\longmapsto}qx_{i+1}\overset{m}{\longmapsto}mqx_{i+1}\overset{p}{\longmapsto}mqx_{i+1}\overset{m^{-1}}{\longmapsto}qx_{i+1}\overset{p^{-1}}{\longmapsto} qx_{i+1}\overset{q^{-1}}{\longmapsto}x_{i+1}\overset{p}{\longmapsto}  x_{i+1} \cr \overset{m}{\longmapsto}mx_{i+1} \overset{p^{-1}}{\longmapsto}mx_{i+1}\overset{m^{-1}}{\longmapsto}x_{i+1};
	\end{gathered}
	$$
	$$
	\begin{gathered}
		h : x_{i+2}\overset{q}{\longmapsto}x_{i+2}\overset{m}{\longmapsto}mx_{i+2}\overset{p}{\longmapsto}pmx_{i+2}\overset{m^{-1}}{\longmapsto}m^{-1}pmx_{i+2}\overset{p^{-1}}{\longmapsto}  p^{-1}m^{-1}pmx_{i+2}\overset{q^{-1}}{\longmapsto} \cr p^{-1}m^{-1}pmx_{i+2}\overset{pmp^{-1}m^{-1}}{\longmapsto}x_{i+2}
	\end{gathered}
	$$
	Thus, $h\in{\rm{Ker}}(\Theta_n^w)$ and the inclusion (\ref{incl}) is proved.
\end{proof}

\begin{theorem} \label{theorem3.2}
	Let $n \geq 3$. For any defining word $w(A,B)\in \mathbb F_2 (A,B)$ the kernel ${\rm{Ker}}(\Theta_n^w)$ contains a subgroup isomorphic to a free group of rank 2.
\end{theorem}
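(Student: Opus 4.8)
The plan is to exploit Lemma~\ref{lemma4.1}, which gives the inclusion $\big[Q_n^i,[M_n^{i+1},P_n^{i+2}]\big]\le\operatorname{Ker}(\Theta_n^w)$, and to produce inside this double commutator subgroup two elements that generate a free group of rank $2$. First I would fix $i=1$ (the general $i$ is identical after relabeling indices) and work with the three commuting ``blocks'': $Q_n^1=\langle t_{1,2},e_{1,2,3}\rangle$ acting nontrivially only on $x_1,x_2$; $M_n^2=\langle t_{2,3},d_{1,2,3}\rangle$ acting nontrivially only on $x_2,x_3$; and $P_n^3=\langle t_{1,3}\rangle$ acting nontrivially only on $x_1,x_3$. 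Using Lemma~\ref{lemma3.3}, the automorphism $\Theta_n^w(t_{1,3})$ sends $x_1\mapsto x_1 u^{-1}$, $x_3\mapsto x_3 u$ for $u=w_{1,3}w_{3,1}$, so $P_n^3$ has infinite order (unless $w_{1,3}w_{3,1}=1$, a degenerate case to handle separately — if $t_{1,3}\in\operatorname{Ker}\Theta_n^w$ one instead takes $P_n^3$ to be generated by, say, $t_{1,2}$-conjugates, or more simply notes that then $\operatorname{Ker}\Theta_n^w$ already contains $P_n^3$ and a parallel argument with a different triple of indices applies). Likewise $Q_n^1$ and $M_n^2$ act on their two strands through nontrivial automorphisms (again generically; the element $t_{1,2}$ or $t_{2,3}$ has infinite order exactly as in the $n=2$ analysis of Theorem~\ref{theorem2.5}).

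The key structural observation is that an element $h$ of the form $h=q\,(mpm^{-1}p^{-1})\,q^{-1}\,(mpm^{-1}p^{-1})^{-1}$ with $q\in Q_n^1$, $m\in M_n^2$, $p\in P_n^3$ lies in $\operatorname{Ker}(\Theta_n^w)$ by the computation in the proof of Lemma~\ref{lemma4.1}. Now I want to choose $q_1,q_2\in Q_n^1$, together with a single fixed $m$ and $p$, so that the two resulting elements $h_1,h_2$ are free generators. The natural choice: let $c=[m,p]=mpm^{-1}p^{-1}$ be a fixed nontrivial element of $FVP_n$ (nontrivial as an abstract group element since $FVP_n$ is residually finite / has the presentation of Lemma from \cite{BBD}, but what matters is its image under a suitable quotient), and set $h_j=q_j c q_j^{-1} c^{-1}=[q_j,c]$ for $q_1=t_{1,2}$ and $q_2=e_{1,2,3}$. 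Then $h_1,h_2$ are commutators of elements of $Q_n^1$ with $c$. To prove $\langle h_1,h_2\rangle$ is free of rank $2$, I would pass to a convenient quotient of $FVB_n$ in which $FVP_n$ maps onto a free group (for instance, abelianize the ``$y$-part'' appropriately, or use the homomorphism onto $FVP_3=\langle a,b,c\mid[a,c]=1\rangle$ obtained by the standard strand-forgetting maps $FVB_n\to FVB_3$ restricted to $FVP_n$), and show the images of $h_1,h_2$ already generate a rank-$2$ free subgroup there.

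Concretely, I expect the cleanest route is: map $FVP_n\twoheadrightarrow FVP_3$ by deleting all strands except $1,2,3$; this sends $t_{1,2}\mapsto t_{1,2}$, $e_{1,2,3}\mapsto e_{1,2,3}$, $t_{2,3}\mapsto t_{2,3}$, $d_{1,2,3}\mapsto d_{1,2,3}$, $t_{1,3}\mapsto t_{1,3}$, and by Theorem~\ref{FVHP_3} (together with the explicit generators $a,b,c$ and the relation $[a,c]=1$) one can arrange that $Q_3^1$, $M_3^2$, $P_3^3$ land in three pairwise-distinct free factors of a free product, or at least in a configuration where the only relations are the obvious commuting ones coming from disjoint strand supports. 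In such a free-product-with-amalgamation picture, an element of the form $[q,[m,p]]$ with $q$ in one factor and $[m,p]$ a cyclically reduced word genuinely involving the other factors has infinite order, and two such elements $[q_1,[m,p]]$, $[q_2,[m,p]]$ with $q_1,q_2$ independent in their factor generate a free group — this is a standard ping-pong / normal-form argument. The main obstacle, and where I would spend the most care, is verifying that the images of $Q_n^1,M_n^2,P_n^3$ under the chosen quotient are in ``general position'' enough for ping-pong, i.e. ruling out accidental coincidences forced by the single relation $[a,c]=1$ in $FVP_3$ and by the $\mathbb Z^2$ and $\Gamma$ factors in the decomposition $X_3=\mathbb Z^2*\mathbb F_3*\Gamma$; concretely one must check that $t_{1,2}$ and $e_{1,2,3}$ map to elements of $X_3$ that are not both swallowed into the abelian $\mathbb Z^2$ factor, so that $[t_{1,2},c]$ and $[e_{1,2,3},c]$ remain independent. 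Once that general-position statement is in hand, the freeness of $\langle h_1,h_2\rangle$ follows from the standard subgroup theorems for free products, and since $h_1,h_2\in\operatorname{Ker}(\Theta_n^w)$ by Lemma~\ref{lemma4.1}, the theorem is proved.
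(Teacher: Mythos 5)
Your overall strategy coincides with the paper's: invoke Lemma~\ref{lemma4.1} to place two elements of the form $[q,[m,p]]$ in $\operatorname{Ker}(\Theta_n^w)$, reduce to $FVP_3=\langle a,b,c\mid[a,c]=1\rangle$, and certify freeness by passing to a quotient where the ambient group is free. The paper takes $h_0=[t_{1,2},[t_{2,3},t_{1,3}]]$ and $h_1=[t_{1,2},[d_{1,2,3},t_{1,3}]]$ (fixed $q=t_{1,2}$, varying $m$), rewrites them as explicit words in $a,b,c$, applies the retraction $\psi\colon FVP_3\to\langle a,b\rangle$ killing $c$, and then uses the elementary dichotomy that a two-generated subgroup of a free group is cyclic or free of rank two, ruling out the cyclic case by a cyclic-reduction argument on the two explicit words. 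Your choice of elements (fixed $[m,p]$, varying $q\in\{t_{1,2},e_{1,2,3}\}$) is a legitimate variant and would very likely also work.

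The genuine gap is that the decisive step --- proving the two chosen elements are actually free generators --- is never carried out; you explicitly defer it as ``the main obstacle.'' That verification is essentially the entire content of the proof once Lemma~\ref{lemma4.1} is in hand, and your proposed route to it (a ping-pong/normal-form argument in the decomposition $X_3=\mathbb{Z}^2*\mathbb{F}_3*\Gamma$, with a general-position check against the $\mathbb{Z}^2$ and $\Gamma$ factors) is considerably harder than what is needed and is left entirely unexamined; note for instance that $\psi(t_{1,2})=\psi(t_{2,3})=b^2$, so ``accidental coincidences'' do occur and must be dealt with by explicit computation rather than by appeal to general position. A second, smaller point: your case distinction around $w_{1,3}w_{3,1}=1$ is unnecessary. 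Whether the chosen elements generate $\mathbb{F}_2$ is a statement about the abstract group $FVP_n$ (or $FVP_3$) and is independent of $w$; the word $w$ enters only through Lemma~\ref{lemma4.1}, which already holds for every $w$. Treating that ``degenerate case'' separately suggests a lingering confusion between freeness in the source group and nontriviality of images under $\Theta_n^w$ (the latter is irrelevant here, since the elements lie in the kernel by construction). With the explicit word computation in $\langle a,b\rangle$ supplied, your argument closes up and matches the paper's.
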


\begin{proof}
	Denote a free group of rank 2 by $\mathbb F_2$. By Lemma~\ref{lemma4.1}, it suffices to show that $\mathbb F_2 \le \Big[Q_3^1,\big[M_3^2,P_3^3\big]\Big]\le FVP_3$. 
	
	Consider the elements
	$$
	h_0 =\big[t_{1,2},[t_{2,3},t_{1,3}]\big] \qquad \text{and} \qquad h_1 =\big[t_{1,2},[d_{1,2,3},t_{1,3}]\big].
	$$
	We write them down in terms of the generators of $FVP_3$ group, see (\ref{three_gen_FVP}):
	\begin{equation}
		h_0 =\big[(c^{-1}b)^2,[b^2,(b^{-1}a)^2]\big] \qquad \text{and} \qquad h_1 =\big[(c^{-1}b)^2,[b^{-2}ca,(b^{-1}a)^2]\big].
	\end{equation}
	
	Let us prove that $h_0$ and $h_1$ generate $\mathbb F_2$. To do this, it suffices to show that there are no relations between them. Let $\psi : FVP_3 \to \langle a, b \rangle$ be the homomorphism given by the mapping $\psi(a) = a$, $\psi(b) = b$ and $\psi (c) = 1 $. Denote $\bar{h}_0 = \psi (h_0)$ and $\bar{h}_1 = \psi (h_1)$. Then
	\begin{align*}
		\bar{h}_0&=b^3ab^{-1}ab^{-2}a^{-1}ba^{-1}b^{-2}ab^{-1}ab^2a^{-1}ba^{-1}b^{-1}, \\
		\bar{h}_1&=ab^{-1}aba^{-1}ba^{-1}b^{-2}ab^{-1}ab^{-1}a^{-1}ba^{-1}b^2.
	\end{align*}
	The elements $\bar{h}_0$ and $\bar{h}_1$ lie in the free group $\langle a, b \rangle$. Hence the group $\langle \bar{h}_0,\bar{h}_1\rangle$ is either isomorphic to $\mathbb{Z}$ or isomorphic to $\mathbb F_2$. The first case means that $\bar{h}_0$ and $\bar{h}_1$ must be powers of the same element, i.e. $\bar{h}_0=g(a,b)^k$ and $\bar{h}_1=g(a,b)^s$ for some word $g(a,b)\in\langle a , b \rangle$ and nonzero $k,s\in\mathbb{Z}$. Let $g(a,b)=f\cdot w\cdot f^{-1}$, where $w(a,b)$ is the cyclic reduced word in $\langle a, b \rangle$. Then $g^s=fw^sf^{-1}=\bar{h}_1$ and since $\bar{h}_1$ is itself cyclically reduced, we get $f=1$. But then $g^k=fw^kf^{-1}=w^k=\bar{h}_0$ must be cyclically reduced, which is not the case. Thus $\langle \bar{h}_0,\bar{h}_1\rangle \cong \mathbb F_2$ and hence $\langle h_0,h_1\rangle \cong \mathbb F_2$.
\end{proof}

\begin{corollary}\label{corollaryLast}
	Let $n \geq 3$, then $\operatorname{Ker}(\Theta_n^{w,m})$ contains a subgroup isomorphic to a free group of rank 2 for any integer tuple $m = (m_1, \ldots, m_{n-1})$ and arbitrary defining word $w(A,B)\in \mathbb F_2 (A,B)$.
\end{corollary}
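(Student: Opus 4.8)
The plan is to deduce Corollary~\ref{corollaryLast} from Theorem~\ref{theorem3.2} by showing that each representation $\Theta_n^{w,m}$ is conjugate, inside $\operatorname{Aut}(\mathbb F_{2n})$, to a representation of the special form $\Theta_n^{w'}$ with trivial $m$-part, for a suitable new defining word $w'\in\mathbb F_2$. Conjugate automorphisms have the same kernel, so the corollary then follows immediately from the $m=0$ case already settled in Theorem~\ref{theorem3.2}.

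To set this up I would first rewrite $\Theta_n^{w,m}(\sigma_i)$ in collapsed form. In formula~(\ref{homodef1}) every factor of $\prod_{k=i}^{2}y_{i+1}^{m_k}$ is a power of $y_{i+1}$ and every factor of $\prod_{k=1}^{i-1}y_i^{-m_k}$ is a power of $y_i$, so, writing $w_{i,i+1}=w(y_i,y_{i+1})$, $\mu_i=\sum_{k=2}^{i}m_k$ and $\nu_i=\sum_{k=1}^{i-1}m_k$,
\[
\Theta_n^{w,m}(\sigma_i):\quad x_i\mapsto x_{i+1}\,y_{i+1}^{\mu_i}\,w_{i,i+1}\,y_i^{-\nu_i},\qquad x_{i+1}\mapsto x_i\,y_i^{\nu_i}\,w_{i,i+1}^{-1}\,y_{i+1}^{-\mu_i},
\]
while $\Theta_n^{w,m}(\rho_i)$ still acts by $x_i\mapsto x_{i+1}y_{i+1}^{m_i}$, $x_{i+1}\mapsto x_iy_i^{-m_i}$, $y_i\leftrightarrow y_{i+1}$. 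Now define $\tau\in\operatorname{Aut}(\mathbb F_{2n})$ by $\tau(y_j)=y_j$ and $\tau(x_j)=x_j\,y_j^{e_j}$ with $e_j=-\nu_j$ (so $e_1=0$; the inverse of $\tau$ is $x_j\mapsto x_j y_j^{-e_j}$), and set $w'(A,B)=B^{-m_1}\,w(A,B)$. The claim to verify is that
\[
\Theta_n^{w,m}(g)=\tau^{-1}\,\Theta_n^{w'}(g)\,\tau\qquad\text{for every }g\in FVB_n,
\]
and since $FVB_n$ is generated by $\sigma_1,\dots,\sigma_{n-1},\rho_1,\dots,\rho_{n-1}$ it is enough to check this on those generators. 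On the $y$-generators both sides agree trivially, since $\Theta_n^{w'}(\sigma_i)$ fixes all $y_j$ and $\Theta_n^{w'}(\rho_i)$ realizes $y_i\leftrightarrow y_{i+1}$, exactly as the left-hand side does. On the $x$-generators it is a short computation which reduces to the elementary partial-sum identities $e_{i+1}+\mu_i=-m_1$, $e_{i+1}+m_i=e_i$ and $e_i-m_i=e_{i+1}$; the factor $y_{i+1}^{-m_1}$ produced by $e_{i+1}+\mu_i$ is precisely what turns $w$ into $w'=B^{-m_1}w$.

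Once this conjugacy is in hand, conjugation by the fixed element $\tau$ is an automorphism of the group $\operatorname{Aut}(\mathbb F_{2n})$, so $\Theta_n^{w,m}(g)=\mathrm{id}$ if and only if $\Theta_n^{w'}(g)=\mathrm{id}$; that is, $\operatorname{Ker}(\Theta_n^{w,m})=\operatorname{Ker}(\Theta_n^{w'})$. Theorem~\ref{theorem3.2} applied to the word $w'$ then exhibits a free subgroup of rank $2$ inside $\operatorname{Ker}(\Theta_n^{w,m})$; concretely one may take the subgroup generated by $h_0=[t_{1,2},[t_{2,3},t_{1,3}]]$ and $h_1=[t_{1,2},[d_{1,2,3},t_{1,3}]]$, which already works for $\Theta_n^{w'}$.

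The only genuine obstacle is spotting the right conjugating automorphism $\tau$ and the shifted word $w'$; after that the verification is routine bookkeeping with the partial sums $\mu_i$, $\nu_i$, $e_i$. As a fall-back one can avoid the conjugacy entirely and re-run the proofs of Lemma~\ref{lemma4.1} and Theorem~\ref{theorem3.2} directly for $\Theta_n^{w,m}$: under $\Theta_n^{w,m}$ the elements $t_{i,j}$, $d_{i,j,k}$, $e_{i,j,k}$ still fix every $y$-generator and still act on exactly the same pairs of $x$-generators as in Lemma~\ref{lemma3.3}, each by right multiplication by a word in the $y$'s, so the commutator computation in the proof of Lemma~\ref{lemma4.1} carries over verbatim, and the homomorphism $\psi$ used in the proof of Theorem~\ref{theorem3.2} involves neither $\Theta$ nor $w$ nor $m$. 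Either route completes the proof.
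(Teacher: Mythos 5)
Your proposal is correct, and your main route is genuinely different from the paper's. The paper proves the corollary the way your ``fall-back'' paragraph describes: it checks directly from the formulas (\ref{homodef1})--(\ref{homodef2}) that under $\Theta_n^{w,m}$ the elements $t_{1,2}$, $t_{2,3}$, $d_{1,2,3}$, $t_{1,3}$ still fix all $y_j$ and still act nontrivially only on the same pairs of $x$-generators as in Lemma~\ref{lemma3.3}, so that the commutator argument of Lemma~\ref{lemma4.1} and the free-group certification in the proof of Theorem~\ref{theorem3.2} (which takes place entirely inside $FVP_3$ and never mentions $w$ or $m$) go through unchanged. Your primary argument instead exhibits an explicit $\tau\in\operatorname{Aut}(\mathbb F_{2n})$, $x_j\mapsto x_jy_j^{e_j}$ with $e_j=-\sum_{k=1}^{j-1}m_k$, conjugating $\Theta_n^{w,m}$ to $\Theta_n^{w'}$ with $w'=B^{-m_1}w$; I checked the partial-sum identities and the conjugacy does hold on all generators, so $\operatorname{Ker}(\Theta_n^{w,m})=\operatorname{Ker}(\Theta_n^{w'})$ and Theorem~\ref{theorem3.2} finishes the proof. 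This is a stronger and more structural statement than the paper proves: it reduces \emph{every} kernel question about the two-parameter family $\Theta_n^{w,m}$ to the $m=0$ subfamily (and one can check it is consistent with the $m_1$-dependence appearing in Theorem~\ref{theorem2.5}), at the price of one routine generator-by-generator verification. The only blemish is notational: with the paper's left-to-right composition convention $\varphi\psi(f)=\psi(\varphi(f))$, your formula $\Theta_n^{w,m}(g)=\tau^{-1}\Theta_n^{w'}(g)\tau$ needs either $\tau$ and $\tau^{-1}$ interchanged or the sign of the exponents $e_j$ flipped; since conjugation by either of $\tau^{\pm1}$ preserves kernels, this does not affect the conclusion.
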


\begin{proof}
	There are three points which are sufficient to prove the corollary:
	\begin{itemize}
		\item $t_{1,2}$ acts non-trivially only on generators $x_1$ and $x_2$, 
		\item $t_{2,3}$ and $d_{1,2,3}$ act non-trivially only on elements $x_2$ and $x_3$, 
		\item $t_{1,3}$ acts non-trivially only on $x_1$ and $x_3$. 
	\end{itemize}
	
	Recall we have $t_{i,i+1} = \lambda_{i,i+1}^2 = (\rho_i \sigma_i)^2$ by the formula (\ref{def-t}). Therefore, $t_{1,2}$ and $t_{2,3}$ satisfy property above.
	
	Let's check that $t_{1,3}=(\rho_2 \rho_1 \sigma_1\rho_2)^2$ leaves $x_2,~y_1,~y_2$ and $y_3$ in place. The element $t_{1,3}$ acts trivially on the generators $y_i$ for $1\le i\le 3$, because using formulas~(\ref{homodef1}, \ref{homodef2}) we get $t_{1,3}\cdot y_i=(\rho_2 \rho_1 \rho_2)^2\cdot y_i = 1\cdot y_i=y_i$. The trivial action on $x_2$ follows from the fact that $\rho_2 \rho_1 \sigma_1\rho_2$ leaves this element in place. Indeed, according to~(\ref{homodef1}, \ref{homodef2}), we obtain
	$$
	\begin{gathered}
		x_2\overset{\rho_2}{\longmapsto} x_3 y_3^{m_2} \overset{\rho_1}{\longmapsto} x_3 y_3^{m_2}\overset{\sigma_1}{\longmapsto} x_3y_3^{m_2}\overset{\rho_2}{\longmapsto} x_2 y_2^{-m_2} y_2^{m_2} = x_2.
	\end{gathered}
	$$
	Similarly, we check the action for
	$$
	\begin{gathered}
		x_1\overset{\sigma_2\rho_2}{\longmapsto} x_1 \overset{\sigma_1}{\longmapsto} x_2w(y_1,y_2)\overset{\rho_1}{\longmapsto} x_1y_1^{-m_1}w(y_2,y_1)\overset{\rho_2}{\longmapsto} x_1y_1^{-m_1}w(y_3,y_1)\overset{\sigma_2}{\longmapsto}\\
		\overset{\sigma_2}{\longmapsto}
		x_1y_1^{-m_1}w(y_3,y_1)\overset{\rho_2}{\longmapsto}
		x_1y_1^{-m_1}w(y_2,y_1)\overset{\rho_1}{\longmapsto}\\
		\overset{\rho_1}{\longmapsto}
		x_2y_2^{m_1}y_2^{-m_1}w(y_1,y_2)\overset{\sigma_1}{\longmapsto} x_1w^{-1}(y_1,y_2)w(y_1,y_2)\overset{\rho_2}{\longmapsto} x_1,\\
		d_{1,2,3}\cdot y_i=\rho_1\rho_2\rho_1\rho_1\rho_2\rho_1 \cdot y_i=y_i,\quad 1\le i\le 3.
	\end{gathered}
	$$
	This completes the proof.
\end{proof}

\end{document}